\makeatletter \@addtoreset{equation}{section}}
\newcommand{\diag}{\mathop{\mathrm{diag}}\nolimits}
\newcommand{\Ker}{\mathop{\mathrm{Ker}}\nolimits}
\newcommand{\IIm}{\mathop{\mathrm{Im}}\nolimits}
\newcommand{\Inv}[1][\,]{\mathop{\mathrm{Inv}#1}\nolimits}
\newcommand{\N}{{\mathbb{N}}}
\newcommand{\Z}{{\mathbb{Z}}}
\newcommand{\h}{ \mathop{ \mathrm{h} {} }\nolimits }
\newcommand{\e}{ \mathop{ \mathrm{e} {} }\nolimits }
\newcommand{\beq}{\begin{equation}}
\newcommand{\eeq}{\end{equation}}
\newcommand{\bcm}{}
\newcommand{\Ra}{\Rightarrow}
\newcommand{\bpm}{\begin{pmatrix}}
\newcommand{\epm}{\end{pmatrix}}
\newtheorem{theorem}{Theorem}[section]
\newtheorem{lemma}[theorem]{Lemma}
\newtheorem{example}[theorem]{Example}
\newtheorem{prop}[theorem]{Proposition}
\definecolor{ungu}{rgb}{0.8 , 0  , 0.7 }
 \definecolor{exp}{rgb}{0.85,0.7,0}
  \definecolor{msk}{rgb}{0.65,0.3,0.3}
\definecolor{msk2}{rgb}{0.75,0.3,0.1}
\definecolor{r}{rgb}{0.75,0.1,0.4}
\definecolor{exp2}{rgb}{0.85,0.6,0.1}
\definecolor{exp4}{rgb}{0.85,0.3,0.5}
\definecolor{vvs}{rgb}{0.5,0.4,0.7}
\begin{document} 

\title{Linear transformations with characteristic subspaces that are not hyperinvariant}



\author{Pudji Astuti
\\
Faculty of Mathematics\\
 and Natural Sciences\\
Institut Teknologi Bandung\\
Bandung 40132\\
Indonesia
\thanks{The work of the first author was supported by the program  ``Riset 
dan Inovasi KK ITB''  of  the  Institut Teknologi Bandung.}  
         \and
Harald~K. Wimmer\\
Mathematisches Institut\\
Universit\"at W\"urzburg\\
97074 W\"urzburg\\
Germany
}

\date{\today}

\maketitle

\begin{abstract}

\vspace{2cm}
\noindent
{\bf Mathematical Subject Classifications (2000):}
15A18, 
47A15, 
15A57. 

\vspace{.5cm}

\noindent
 {\bf Keywords:}  hyperinvariant subspaces,
  characteristic sub\-spaces,   invariant subspaces, Ulm invariants,
characteristic hull,
exponent, height.

\vspace{1cm}

\noindent
{\bf Abstract:}

If  $f$ is  an endomorphism of a finite dimensional vector
space over a field $K$ then 
an invariant subspace $X \subseteq V$ is called
hyperinvariant (respectively, characteristic)  if  
$X$ is invariant under
 all endomorphisms (respectively, auto\-morphisms) 
that commute with $f$.
According to Shoda
(Math.\ Zeit.\ 31,  611--624, 1930)
only if  $|K| = 2$ 
then there exist
 endomorphisms $f$
with invariant subspaces that are  characteristic
but not hyperinvariant. 
In this paper we obtain a description of the
set of all characteristic non-hyperinvariant subspaces 
for nilpotent maps $f$  
with exactly two unrepeated  elementary
divisors.

\vspace{3cm}
\noindent
{\bf Address for Correspondence:}\\
H. Wimmer\\
Mathematisches Institut \\
Universit\"at W\"urzburg\\
Am Hubland\\
97074 W\"urzburg\\
Germany

\vspace{.2cm}
\flushleft{
{\textsf e-mail:}~~\texttt{\small wimmer@mathematik.uni-wuerzburg.de} }\\
{\textsf Fax:}~~
+49 931 8\,88\,46\,11

\flushleft{
{\textsf e-mail:}~~\texttt{\small
pudji@math.itb.ac.id
} }

\end{abstract}

\section{Introduction} \label{sct.int}

Let $K$ be a field, $ V $ an $n$-dimensional vector space
over $K$ and  $f : V \to V $ a $K$-linear map.
A subspace $X \subseteq V$ is said to be {\em{hyperinvariant}}
(under  $f$)  
\cite[p.\ 305]{GLR} 
if it  remains invariant under
all endomorphisms of $V$ that commute with $f$.
If $X$ is an   $f$-invariant subspace of $V$ and if
$X$ is  invariant
under
 all automorphisms of $V$ that commute with $f$,
then \cite{AW4}
  we say that    $X$ is   {\em{characteristic}} (with respect to $f$).
Let $ {\rm{Inv}}( V,f) $, \,$   {\rm{Hinv}}(V,f)$, and
$  {\rm{Chinv}} (V,f)  $ be  sets of  invariant,   hyperinvariant and
  characteristic  subspaces of  $V$, respectively.
These sets are lattices, and
 \[   {\rm{Hinv}} (V,f)
 \subseteq   {\rm{Chinv}} (V,f)   \subseteq   {\rm{Inv}}( V,f)  .
\]
If the   characteristic
  polynomial of $f $ splits over $K$ (such that all eigen\-values of
$f$ are in $ K$)
 then 
  one can restrict the
  study of 
 hyperinvariant and of characteristic  subspaces to the case where
$ f $ has only one eigenvalue, and  to the
case where $f$ is nilpotent.
Thus, throughout this paper we shall assume \,$ f^n = 0 $.
Let 
\,$
\Sigma(\lambda)  =  \diag (1, \dots , 1, 
 \lambda^{t_1}, \dots , \lambda^{t_m}) \in K^{n \times n}[\lambda] $\,
be the Smith normal  form of $f$ such that $ t_1 + \cdots +t_m = n$. 
We say that an  elementary divisor $ \lambda ^r $ is  {\em{unrepeated}}
if  it  appears exactly once in $\Sigma(\lambda) $.

The structure of the  lattice  \,$   {\rm{Hinv}} (V,f) $\,
is well understood (\cite{Lo},  \cite{FHL},
\cite{Lo2},  \cite[p.\,306]{GLR}).
We point out that
 \,$   {\rm{Hinv}} (V,f)  $\, is the 
sublattice of $ {\rm{Inv}}( V,f) $
generated by
\[
  \Ker f ^k,  \,\, \IIm f^k ,  \: k = 0, 1, \dots , n.
\]
It is   known  (\cite{Sh}, \cite[p.\,63/64]{Kap},   \cite{AW4})
that each  characteristic subspace
is hyper\-invariant if  $|K| > 2$.
Hence, only if $V$ is a vector space over the
field $ K = GF(2) $ one may find  $K$-endomorphisms  $f$  of  $V$
with   characteristic subspaces that are not   hyperinvariant.
A necessary and sufficient condition for the existence of
  such mappings $f$ is due to Shoda 
(see also \cite[Theorem~9, p.\,510]{Bae} and  \cite[p.\,63/64]{Kap}).
It involves unrepeated elementary divisors of $f$. 

\begin{theorem}    \label{thm.vnpsa} 
{\rm{\cite[Satz~5, p.\,619]{Sh}}}     
Let $ V $ be a finite dimensional vector space over
  the field  \,$ K =  GF(2) $ and let $ f : V \to V $ be nilpotent.
The following statements are equivalent.
\begin{itemize}
\item[{\rm{(i)}}]
There exists a  characteristic subspace of  \,$V$
 that is not  hyper\-invariant.
\item[{\rm{(ii)}}]
For some  numbers $ R $ and $ S $ with
\,$ R + 1  < S $\,
the map $f$ has 
 exactly one  elementary divisor $\lambda^R $ and exactly one of the
form  $\lambda^S $.
\end{itemize}
\end{theorem}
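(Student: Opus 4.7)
The plan is to exploit the block-Toeplitz structure of the centralizer $\End(V,f) = \{g : gf = fg\}$. Fix a Jordan decomposition $V = \bigoplus_{i=1}^m V_i$ with $V_i = K[f]v_i$ of exponent $t_i$. An element $g \in \End(V,f)$ is determined by the images $g(v_i) \in \Ker f^{t_i}$; over $K = GF(2)$ it is an automorphism precisely when its induced action on $V/fV$ is invertible, an action which is block upper-triangular with one $k_r \times k_r$ diagonal block for each exponent $r$ (where $k_r$ is the multiplicity of $\lambda^r$), each needed to lie in $GL_{k_r}(GF(2))$.

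For (ii) $\Rightarrow$ (i), let $u$ and $v$ generate the unrepeated components of exponents $R$ and $S$, respectively. I would propose
\[
X \;=\; \langle u + f^{S-R-1}v\rangle \;+\; f K[f]u \;+\; f^{S-R} K[f]v.
\]
Invariance under $f$ follows from $f(u + f^{S-R-1}v) = fu + f^{S-R}v \in X$. The commuting endomorphism $h$ that kills $u$ and fixes every $f^iv$ sends $u + f^{S-R-1}v$ to $f^{S-R-1}v$; since $S - R - 1 < S - R$ and $u \notin X$, this shows $X$ is not hyperinvariant. For characteristic invariance, any automorphism $g \in \End(V,f)$ has leading coefficient $1$ on both unrepeated blocks, so $g(u) - u \in fK[f]u + f^{S-R}K[f]v \subseteq X$. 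The cross contribution to $g(f^{S-R-1}v) - f^{S-R-1}v$ consists of terms $\gamma_j f^{S-R-1+j}u$ with $0 \le j \le R-1$; the hypothesis $R + 1 < S$ forces $S - R - 1 \ge 1$, so every non-zero such term lies in $fK[f]u \subseteq X$, while the $V_S$ contribution lies in $f^{S-R}K[f]v \subseteq X$. Hence $g(u + f^{S-R-1}v) \in (u + f^{S-R-1}v) + X = X$.

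For (i) $\Rightarrow$ (ii), I would prove the contrapositive: if (ii) fails, then $f$ has at most one unrepeated elementary divisor, or it has two unrepeated divisors of consecutive exponents, and I claim ${\rm{Chinv}}(V,f) = {\rm{Hinv}}(V,f)$. The underlying idea is the Baer--Kaplansky scalar-shift: if $g \in \End(V,f)$ can be expressed as a $GF(2)$-sum of commuting automorphisms $g = h_1 + \cdots + h_n$, then $gX \subseteq h_1X + \cdots + h_nX \subseteq X$ for every characteristic $X$. The elementary lemma that for $k \ge 2$ every element of $M_k(GF(2))$ is a sum of two elements of $GL_k(GF(2))$ shows that repeated blocks impose no obstruction, and a parity argument over $GF(2)$ handles the case of at most one unrepeated block. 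In the consecutive case $t_S = t_R + 1$, one uses the additional \emph{tight} shift-automorphism $v_R \mapsto v_R + fv_S$, which lies in the centralizer precisely because $f^R(v_R + fv_S) = f^{R+1}v_S = f^{t_S}v_S = 0$, to cover the residual obstruction.

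The main obstacle is the consecutive-exponent case of the necessity direction: a mere scalar-shift by $\{0, 1\}$ is inadequate, and one has to orchestrate the extra cross-block automorphisms carefully to verify that no invariant subspace can simultaneously be characteristic and non-hyperinvariant when $t_S - t_R = 1$. Sufficiency is a direct computation once $X$ is identified, and the parity lemma on $GL_k(GF(2))$-sums is routine.
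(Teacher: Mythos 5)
The paper does not actually prove Theorem~\ref{thm.vnpsa}; it quotes it from Shoda and Kaplansky, so there is no internal proof to compare with. Judged on its own, your argument has a genuine gap in the direction (ii)$\Rightarrow$(i). The subspace $X=\langle u+f^{S-R-1}v\rangle+fK[f]u+f^{S-R}K[f]v$ is contained in $\langle u\rangle\oplus\langle v\rangle$, and your verification that it is characteristic only accounts for automorphisms mixing the two unrepeated blocks with each other. But condition (ii) allows $f$ to have arbitrarily many further (repeated) elementary divisors, and then ${\rm{Aut}}(V,f)$ contains maps such as $\alpha\colon u\mapsto u+g$ with $g\in\langle w\rangle[f^{R}]\setminus\{0\}$ for a third generator $w$, all other generators fixed; this is exactly the content of Lemma~\ref{la.unrpes}(iii), where the correction term $g$ ranges over \emph{all} blocks other than $\langle u\rangle$, not just over $\langle v\rangle$. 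For such $\alpha$ one gets $\alpha z-z=g\notin X$, so $X$ is not characteristic. Your computation is correct precisely in the two-generator case $V=\langle u\rangle\oplus\langle v\rangle$. In general one must replace $X$ by the characteristic hull $\langle u+f^{S-R-1}v\rangle^{c}$, and then the burden shifts to showing that the hull, after absorbing contributions from the repeated blocks, is still not hyperinvariant; that is the nontrivial step, and it is what Lemma~\ref{la.umbn} and the proof of Theorem~\ref{thm.pfle}(i) are designed to supply (split $V=E\oplus G$ as in \eqref{eq.hnngt}, establish non-hyperinvariance inside $E=\langle u\rangle\oplus\langle v\rangle$, then lift it to $V$ via $Y^{c}\cap T=Y^{c_T}$).

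In the direction (i)$\Rightarrow$(ii) your outline is the standard and correct one (if $g\in{\rm{End}}(V,f)$ is a sum of elements of ${\rm{Aut}}(V,f)$, then $g$ preserves every characteristic subspace), but the case you yourself flag as the main obstacle --- exactly two unrepeated divisors with consecutive exponents --- is precisely where this device provably cannot close the argument unaided: a sum of $k$ commuting automorphisms has constant coefficient $k\bmod 2$ on \emph{both} unrepeated $1\times 1$ diagonal blocks simultaneously, so the coordinate projection onto a single unrepeated block is never such a sum. Hence the cross-block automorphisms $u_\rho\mapsto u_\rho+fu_\tau$ and $u_\tau\mapsto u_\tau+u_\rho$ must carry the entire weight of showing $\pi_\rho x,\pi_\tau x\in X$ for $x$ in a characteristic $X$ when $t_\tau=t_\rho+1$, and that argument is asserted rather than given. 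As it stands, both halves of the proof are incomplete, the sufficiency half because the exhibited subspace is simply not characteristic once $f$ has a third Jordan block.
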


Provided that 
 $f$  satisfies     condition (ii) of 
Shoda's theorem
 how can one construct 
 all  characteristic subspaces of  $V$ that are not hyper\-invariant?
For the moment the answer to that question is open. 
In this paper we assume 
that $f$ has   exactly one
pair of unrepeated elementary divisors. 
In that case we show how to construct the family of 
characteristic and non-hyperinvariant subspaces associated to  $f$. 
For that purpose   we  prove rather    general  results on the structure of 
characteristic non-hyperinvariant subspaces and  we
clarify the role of unrepeated elementary divisors of $f$. 
We note that our study  can be interpreted in the setting of module theory. 
In the context of abelian group theory
\cite{FuII} one would deal with 
characteristic subgroups of $p$-groups that are not  fully invariant. 

We first discuss an example, which 
displays  features of characteristic
subspaces that will become important later,  and we  introduce 
 concepts that will allow us to  state 
 Theorem~\ref{thm.pfle} at the end of this
section.

\subsection{An example and
basic concepts}  

The  inequality \,$ R + 1  < S $\,
in Theorem~\ref{thm.vnpsa} is valid for 
  $ (R, S)  = (1,3) $. 
In Example \ref{ex.sh} below we describe
 a subspace  that is
characteristic but not hyper\-in\-variant
with respect to 
a map $f$ with elementary divisors $ \lambda$ and $ \lambda ^3$.
We first  introduce some notation, in particular
we define the  concepts of  exponent and height.
We set $  V[ f^j ] =  \Ker f^j $, $j \ge 0 $.
Thus,  $ f^n = 0 $ implies $ V =  V[ f^n]$.
Define $ \iota = {\rm{id}}_V $ and $ f^0 =  \iota$.
Let $ x \in V $. The smallest nonnegative integer $\ell$
 with
$f^{\ell} x = 0$
is  called
the {\em{exponent}} of $x$. We write  $\e(x) = \ell$.
A  nonzero vector  $x $
  is said to have
\emph{height} $q $ if $x \in f^q V$
and $x \notin f^{q+1} V$.
In this case we write $\h(x) = q$. We set $ \h ( 0 ) =  - \infty $.
The $n$-tuple
 \[
H(x) = \big(\h(x), \h(fx), \dots, \h(f^{n-1}x) \big)
\]
is the  {\em{indicator}} \cite[p.\,3]{FuII} or {\em{Ulm sequence}}  \cite{Kap}
of $x$. Thus, if  $\e(x) = k $
 then
 $H(x) = (\h(x),  \dots , \h(f^{k-1}x),  \infty, \dots,  \infty)$.
We say that $H(x) $ has a  {\em{gap}} at $j$, if
 \,$1 \le j  < \e(x)  $\,  and  \,$ h(f^{j}x ) >
1 +  h(f^{j-1}x )$.
Let $  {\rm{End}}(V,f) $  be the algebra of
all endomorphisms of $V$ that commute with $f$.
The group of  automorphisms of $V$
that commute with $ f $ will be denoted by
 ${\rm{Aut}}(V,f) $.
Let
\begin{multline*}
 \langle x  \rangle \,  = \,  {\rm{span}} \{ f^i x , \, i \ge 0 \}
=
\\
\{c_0x + c_1 f x + \cdots + c_{n-1} f^{n-1}x ;
\,
  c_i \in K, \, i = 0, 1, \dots , n-1 \}
\end{multline*}
 be  the  $f$-cyclic subspace  generated by $ x $.  
 If \,$ B  \subseteq V$\, we define
 \,$  \langle B \rangle  =
 \sum _{b \, \in \,  B } \,  \langle \,  b  \, \rangle  $
and 
\[
  B  ^c =  \langle  \alpha b ;  \, b \in B , \,  \alpha \in {\rm{Aut}}(V,f) \rangle .
\]
We call  $ B  ^c $  the 
{\em{characteristic hull}} of $  B  $. 
Clearly, if  $ \alpha \in {\rm{Aut}}(V,f) $ then
  $ \alpha ( f^j  x ) = f^j (  \alpha x ) $  for all
$x \in  V$. Hence  it is obvious
that
\beq \label{eq.aprs}
 \e( \alpha x ) = \e(x) \quad \text{and} \quad \h( \alpha x ) = \h(x)
\quad \mathrm{for \: \: all} \quad  x \in V, \,  \alpha \in {\rm{Aut}}(V,f).
\eeq
Let $ e_1 = (1, 0, \dots , 0) ^T$, $\dots $,  $ e_m =
  (0, \dots , 0, 1)  ^T $  be the unit vectors of $K^m$,
and let $N_m $ 
denote the lower triangular nilpotent $ m \times m $ Jordan block.

\begin{example} \label{ex.sh}
{\rm{\cite[p.\,63/64]{Kap})}
Let $ K = GF(2) $.  
 Consider  $ V = K ^4 $     
 and let $f: V \to V $ 
be $K$-linear with elementary divisors $\lambda $ and $ \lambda ^3$.  
With respect to the basis 
$\{ e_i \}  $, $ i = 1, \dots , 4 $, 
the map $f$ is given by 
$ f x = N x $ with
 \[
N =  \diag(N_1, N_3) =  \left( \begin{array}{c|ccc}
  0 & 0   & 0 & 0
\\ \hline
 0 & 0  & 0  & 0\\
                0 & 1 & 0 & 0
\\
0 & 0 & 1 & 0
\end{array}  \right) ,
\]
and  \,$ V = \langle e_1 \rangle  \oplus  \langle e_2  \rangle
$
with $\e(e_1) = 1 $ and $\e(e_2) = 3$. 
Then   $ \alpha \in  {\rm{Aut}}(V,f) $ if and only if
$ \alpha x = Ax $ and  $A \in K^{4 \times 4 } $
 is a nonsingular matrix satisfying $ AN = NA $,
that is (see e.g.\  \cite[p.\ 28]{ST}), 
\[ 
A =  \left( \begin{array}{c|ccc}
  1 & \nu    & 0 & 0
\\ \hline
 0 & 1  & 0  & 0\\
                0 & \omega   & 1 & 0
\\
 \kappa & \mu  &  \omega   & 1
\end{array}  \right) , \: \kappa, \mu  ,  \nu ,  \omega \in K. 
\]
Define $ z = e_1 + e_3 $.  
We show that  the characteristic subspace 
$X = \langle  z \rangle ^c $ 
is not hyperinvariant. 
We have  
$ \alpha z  =  
 ( e_1 + \kappa e_4) + (e_3 +  \omega  e_4 )   $, 
and therefore 
 \begin{multline*} 
X =
\langle  z \rangle ^c  = 
 \langle  e_1 + fe_2 \rangle ^c = \\
{\rm{span}} \{  e_1 + e_3 ,   e_1 + e_3 + e_4\}  = 
 \{ 0,   e_1 + e_3,   e_1 + e_3 + e_4, e_4 \}   .
 \end{multline*} 
 Let
\, $ \pi _1 = \diag (1, 0 , 0 ,  0) $   be the orthogonal projection
on $ K e_1 $.
Then $  \pi _1  \in  {\rm{End}}(V,f) $.
We have  \,$ \pi_1  z    = e_ 1 $,  but 
$ e_1 \notin  X    $.
 Therefore
  $ X   $ is  a characteristic subspace
that is not  hyper\-invariant.
From 
 $H(z) = (0, 2, \infty, \infty)$
we see that 
 $ H(z)  $ has a gap at $j = 1$.
\hfill $\square$ 
 } 
\end{example}

To   make the  connection with
Kaplansky's exposition of Shoda's theorem
 \cite[p.\,63/64]{Kap}
we  define   the numbers
\[
d(f, r) =
\dim \big(  V[f] \cap
  f ^{r -1} V \,\,  /  \, \, V[f]  \cap  f ^{r}  V \big),
\:\:
 r= 1, 2,  \dots , n.
\]
In accordance with the terminology
of  abelian $p$-groups \cite[p.\,154]{FuI}  or $p$-modules
\cite[p.27]{Kap}
we call $ d(f, r) $ the ($r-1$)-th   {\em{Ulm invariant}} of $f$.
Then 
$   d (f, r) $ is equal to  the number of 
entries $ \lambda ^r $ in the Smith form of  $f$, and 
 $ d(f, r) = 1 $ means that 
  $  \lambda ^r $  is an  unrepeated  elementary divisor. 
In the following it may be convenient to    write $d (r)$ instead   of  $ d (f, r) $.
We recall (see e.g.\  \cite{BCh}) that
methods or concepts  
of abelian group theory must be translated
to modules over principal ideal domains and then specialized
to $K[\lambda]$-modules before they can be applied to linear algebra.

With regard to Theorem~\ref{thm.pfle}  below 
we note additional definitions. 
Suppose  $ \dim \Ker f = m $.  Let $ \lambda ^{t_1} , \dots , \lambda ^{t_m} $
be the elementary divisors of $f$ such that
$ t_1 + \cdots + t_m = \dim V  $. 
Then $V$ can be decomposed into a direct sum of $f$-cyclic  subspaces
$ \langle u_i \rangle $ such that
\beq \label{eq.vdeco}
 V =
  \langle u_1 \rangle
 \,  \oplus \,
\cdots \,  \oplus    \langle u_{ m } \rangle \quad \text{and} 
\quad \e(u_i) = t_i ,  \: i = 1, \dots , m . 
\eeq 
If \eqref{eq.vdeco} holds 
and if the elements in  $U$ are ordered by  nondecreasing exponents
such that
\[
\e( u_1 )  \,\,   \le \,\,  \cdots \,\,  \le \,\,\e( u_m ) 
\]
then we  call  \,$ U = ( u_1, \dots , u_m) $\,    a
{\em{generator tuple}} of $V$
(with respect to $f$). 
The  tuple $(t_m , \dots , t_1) $ of exponents  - written
in nonincreasing order -  is known as {\em{Segre characteristic}} 
of $f$. 
The set of  generator tuples  of $V$ will be denoted by
 $ \mathcal{U} $.
We call $ u \in V $
 a {\em{generator}}  of $V$
(see also \cite[p.4]{FuI})  if 
$u \in U $ for some $U \in  \mathcal{U} $. 
In other words, 
$u \in V $ is a generator if and only if $u\ne0$ and
\beq \label{eq.vpluus}
  V =   \langle u \rangle \oplus V_2  \quad \text{for some}
  \quad V_2 \in  {\rm{Inv}}( V,f) . 
\eeq

If $f$ has only two elementary divisors then 
part (ii)  of  the following theorem gives a description of the 
set ${\rm{Chinv}} (V,f)\setminus  {\rm{Hinv}} (V,f)$.

\begin{theorem}  \label{thm.pfle} 
Assume $ |K| = 2 $. 
Suppose $ \lambda ^R $ and $ \lambda  ^S $ are unrepeated elementary
divisors of $f$ and 
$ R + 1 < S $.  Let  $ u $ and $ v $ be
corresponding  generators of $V$ such that
$\e(u) = R $   and $ \e(v ) = S $. 
\begin{itemize}
\item[{\rm{(i)}}]
A   subspace  
 \beq \label{eq.zbns}
  X =   \langle f^{R -s } u  +  f^{S -q } v \rangle ^c
\eeq
is charac\-teristic and not hyperinvariant
if  the integers $s, q $ satisfy  
\beq \label{eq.vrschv}
  0 < s \le R ,  \: \: s < q ,  \:\:  R -s  < S -q  . 
\eeq 
\item[{\rm{(ii)}}]
Suppose 
$ V = \langle u \rangle \oplus \langle v \rangle $. 
Then  an invariant subspace $X \subseteq V $
is charac\-teristic and not hyperinvariant 
 if and only if $X$ is of the form \eqref{eq.zbns} 
and $s,q $ satisfy  \eqref{eq.vrschv}. 
\end{itemize} 
\end{theorem}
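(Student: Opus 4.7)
The plan is to verify (i) directly by exhibiting a commuting endomorphism that moves a vector out of the characteristic hull $X$, and to establish (ii) by reducing any characteristic non-hyperinvariant subspace to a characteristic hull of the prescribed form. Two inputs from the paper's general structure theory will be assumed: first, that when $V$ has exactly two elementary divisors every characteristic non-hyperinvariant subspace equals $\langle \tilde z \rangle^c$ for a single $\tilde z$; and second, that $\langle \tilde z \rangle^c$ is hyperinvariant whenever $H(\tilde z)$ has no gap, so that a non-hyperinvariant cyclic hull must come from a $\tilde z$ whose Ulm sequence has a gap.

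For (i), the subspace $X$ is characteristic by construction. To show that it is not hyperinvariant, I would use the projection $\pi : V \to \langle u \rangle$ along $\langle v \rangle$; since $f$ preserves both summands, $\pi \in \End(V,f)$. Then $\pi(z) = f^{R-s}u$, and it suffices to check that $f^{R-s}u \notin X$. Every $\alpha \in \Aut(V,f)$ is determined by $\alpha u = p(f)u + g(f)v$ and $\alpha v = r(f)u + t(f)v$ with $p(0) = t(0) = 1$ and $g(\lambda)$ divisible by $\lambda^{S-R}$. A short computation, using the hypothesis $R - s < S - q$ to push the cross-term $f^{S-q}r(f)u$ into $f^{R-s+1}\langle u \rangle$, shows that
\[
\alpha z \;\equiv\; f^{R-s}u + f^{S-q}v \pmod{f^{R-s+1}\langle u \rangle \,\oplus\, f^{S-q+1}\langle v \rangle}
\]
for every $\alpha$. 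Writing a general $w \in X$ as a $GF(2)$-sum $\sum_k f^{i_k}\alpha_k z$, the condition $\pi(w) = f^{R-s}u$ forces an odd number of contributions with $i_k = 0$; by the same parity, the lowest term of the $v$-component of $w$ equals $f^{S-q}v \neq 0$, contradicting $w = f^{R-s}u$. Hence $\pi(z) \notin X$ and $X$ is not hyperinvariant.

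For (ii) one direction is (i). For the converse, I would take $X$ characteristic and non-hyperinvariant and write $X = \langle \tilde z \rangle^c$ with $H(\tilde z)$ having a gap, using both cited inputs. Expanding $\tilde z = p(f)u + g(f)v$, both $p$ and $g$ must be nonzero, otherwise $\tilde z$ would lie in a single cyclic component and its characteristic hull would be hyperinvariant. Factor $p(f) = f^{R-s}\tilde p(f)$ and $g(f) = f^{S-q}\tilde g(f)$ with $\tilde p(0), \tilde g(0) \neq 0$, so that $u' = \tilde p(f)u$ and $v' = \tilde g(f)v$ generate $\langle u \rangle$ and $\langle v \rangle$ respectively (a polynomial in $f$ with nonzero constant term acts invertibly on each cyclic summand), and $\tilde z = f^{R-s}u' + f^{S-q}v'$. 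Since $\langle u' \rangle$ and $\langle v' \rangle$ are $f$-invariant direct summands, $\h(f^i \tilde z) = \min(R-s+i, S-q+i)$ whenever both summands are nonzero. An elementary case analysis shows that if $R - s \geq S - q$ or $s \geq q$ then $H(\tilde z)$ is an unbroken arithmetic progression with no gap; hence the gap forces both $R - s < S - q$ and $s < q$. Combined with $0 < s \leq R$, which holds because $p \neq 0$, all the inequalities \eqref{eq.vrschv} are satisfied, and $\tilde z$ has the form in \eqref{eq.zbns}.

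The main obstacle is the cyclicity input used in (ii): namely the reduction, for $f$ with two elementary divisors, of every characteristic non-hyperinvariant subspace to a characteristic hull of a single element. This is where the two-block hypothesis is most strongly used; it rests on the general structure theory of the paper and on a careful analysis of the $\Aut(V,f)$-orbits over $GF(2)$. Once that reduction is available, both the parity argument in (i) and the Ulm-sequence case analysis in (ii) are routine, and the gap inequality $S - R \geq 2$ is used essentially throughout.
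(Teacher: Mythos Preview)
Your argument for (i) tacitly assumes $V=\langle u\rangle\oplus\langle v\rangle$: the projection ``$\pi:V\to\langle u\rangle$ along $\langle v\rangle$'' and your description of $\Aut(V,f)$ via $\alpha u=p(f)u+g(f)v$, $\alpha v=r(f)u+t(f)v$ only make sense in the two--block case. But (i) is asserted for an arbitrary $V$ in which $\lambda^R,\lambda^S$ are merely two of possibly many elementary divisors. Your parity computation therefore proves the sufficiency half of (ii), not (i). The paper closes this gap with an extra reduction step: setting $T=\langle u\rangle\oplus\langle v\rangle$, it shows (Lemma~\ref{la.umbn}) that $\langle z\rangle^{c}\cap T=\langle z\rangle^{c_T}$, so non--hyperinvariance of $\langle z\rangle^{c_T}$ in $T$ (your two--block calculation) forces non--hyperinvariance of $\langle z\rangle^{c}$ in $V$. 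Without some device of this kind your proof of (i) is incomplete.

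For (ii) your route is genuinely different from the paper's. The paper does not use the gap criterion for $H(\tilde z)$; instead, in Proposition~\ref{la.allen} it computes $X_H=\langle f^{r_1}u_1\rangle\oplus\langle f^{r_2}u_2\rangle$, picks $z\in X\setminus X_H$, normalises it via Lemma~\ref{la.baer} to $z=f^{\mu_1}u_1+f^{\mu_2}u_2$, and then forces $\mu_i=r_i-1$ and the strict inequalities by applying the explicit automorphisms $(u_1,u_2)\mapsto(u_1+fu_1,u_2)$, $(u_1,u_2)\mapsto(u_1,u_1+u_2)$, $(u_1,u_2)\mapsto(u_1+f^{S-R}u_2,u_2)$. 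This gives both cyclicity $X=\langle z\rangle^c$ and the inequalities in one pass. Your approach---take cyclicity as a black box and read off the inequalities from the Ulm sequence---is cleaner once the inputs are in hand, but note that the implication ``no gap $\Rightarrow$ hyperinvariant'' is only stated, not proved, in the paper, so you would need to supply that argument (in the two--block case it amounts to showing $\langle\tilde z\rangle^c=\IIm f^{h}\cap\Ker f^{e(\tilde z)}$ when $H(\tilde z)=(h,h+1,\dots)$, which is routine).
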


The proof of  Theorem~\ref{thm.pfle}(ii)  will
be given in Section~\ref{sct.twoc}, where 
two propositions will be proved, one dealing with  sufficiency 
 and the other one with necessity of condition  \eqref{eq.vrschv}. 
In Section~\ref{sct.eg}
we split the space $V$ into two complementary invariant 
subspaces $E$ and $G$ such that
the unrepeated elementary divisors of $f $ are those 
of $ f_{|E} $ and the repeated ones are those of $  f_{|G}  $.
It will be shown that 
a characteristic subspace $X $ is  hyperinvariant   in $V$ 
if and only if   $X \cap E $  is  hyperinvariant in $E$.
An application of that approach is a proof of 
Theorem~\ref{thm.pfle}(i).  
In Section~\ref{sct.mn} 
we extend Theorem~\ref{thm.pfle}(ii)  assuming 
that $f$ has only two unrepeated elementary divisors.

\section{The case of two elementary divisors} 
\label{sct.twoc} 

In this section we give a proof of Theorem~\ref{thm.pfle}(ii).
It is based on auxiliary results on hyperinvariant subspaces 
and on images of generators under $f$-commuting 
automorphisms of $V$.

\subsection{Hyperinvariant subspaces} 

Suppose $\dim \Ker f = m $.
Let  $ U = (u_1, \dots , u_m) \in \mathcal{U} $ be a 
generator tuple with $ \e(u_i ) = t_i$, $ i = 1, \dots , m$,
such that 
\[
 0 < t_1 \le \cdots \le t_m .
\] 
Set  $ \vec{t}  = (t_1 , \dots , t_m) $ and $t_0 = 0$. 
Let  $\mathcal{L}( \vec t \, ) $  be the set of $m$-tuples 
$\vec r = (r_1 , \dots , r_m )  \in \Z^m $\,  
satisfying
\beq \label{eq.zwrug} 
0 \le r_1 \le \cdots  \le r_m  \:\:\,  {\rm{and}} \:\:\, 
0 \le t_1 - r_1   \le \cdots \le t_m -  r_m  .
\eeq 
We write $ \vec r   \preceq \vec s $
if  $ \vec r= ( r_i )_{i=1} ^m $, $ \vec  s = (s_i ) _{i=1} ^m $ 
$ \in \mathcal{L}(\vec t \, \, ) $
 and
 $r_i \le s_i$, $i= 1, \dots , m $.
Then  $\big( \mathcal{L}(\vec t \, ),   \preceq  \! \big) $ is a
 lattice.  The following theorem is due to Fillmore, Herrero and 
Longstaff \cite{FHL}.  We  refer to \cite{GLR} for a proof.

\begin{theorem}  
 \label{thm.knwr} 
Let $f : V \to V $ be nilpotent. 
\begin{itemize} 
\item[\rm{(i)}]
If $ \vec r \in  \mathcal{L}(\vec t \,) $,   
then
\[ 
W(\vec r \, ) = f ^{r_1}V \cap V[ f^{t_1 - r_1} ]
 \,  + \cdots + \, 
 f^{r_m}V \cap V[ f^{t_m- r_m} ] 
\]
 is a hyperinvariant subspace.
Conversely, each
$ W \in  {\rm{Hinv}}(V,f)$ is of the form
$W = W(\vec r \, ) $ for some $ \vec r \in  \mathcal{L}(\vec t \, ) $.
\item[\rm{(ii)}] 
 If  $ \vec r \in  \mathcal{L}(\vec t \, ) $
then
\,$ W(\vec r \, ) =
  f ^{r_1}  \langle u_1 \rangle  \,  \,  \oplus  \cdots  \,  \,
\oplus  
f ^{r_m}  \langle  u_m \rangle  $. 
\item[\rm{(iii)}]   The mapping  $ \vec r \mapsto W( \vec r \, ) $
is a lattice isomorphism from $\big( \mathcal{L}(\vec t \, \, ),  \preceq  \! \big) $
  onto $( {\rm{Hinv}} (V,f), \supseteq  )$.
\end{itemize} 
For a given $\vec t $  
the number of  hyperinvariant 
subspaces is 
\beq   \label{eq.nhnu} 
  n_H(\vec t \, ) = \prod \nolimits _{i = 1} ^m 
(  1 + t_i - t_{i-1}) . 
\eeq
\end{theorem}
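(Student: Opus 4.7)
The plan is to work in the fixed cyclic decomposition $V = \langle u_1\rangle \oplus \cdots \oplus \langle u_m\rangle$ and argue coordinate-wise. Since $\IIm f^k = \bigoplus_j f^k\langle u_j\rangle$ and $\Ker f^\ell = \bigoplus_j \bigl(\langle u_j\rangle \cap \Ker f^\ell\bigr)$ both respect the decomposition, one computes inside each cyclic summand that $f^{r_i}\langle u_j\rangle \cap \Ker f^{t_i - r_i} = f^{r_i + \max(0,\, t_j - t_i)}\langle u_j\rangle$. Summing over $i$ and using that both $r_i$ and $t_i - r_i$ are nondecreasing (the defining conditions of $\vec r \in \mathcal{L}(\vec t\,)$), the minimum exponent appearing in the $j$-th cyclic component is attained at $i = j$ and equals $r_j$. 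This yields the identity $W(\vec r\,) = \bigoplus_j f^{r_j}\langle u_j\rangle$ of part (ii). Hyperinvariance of $W(\vec r\,)$ is then immediate, since $\Ker f^k$ and $\IIm f^k$ are hyperinvariant (every $g \in \End(V,f)$ commutes with $f^k$) and $\Inv(V,f)$ is closed under sums and intersections.

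For the converse in (i), let $W \in {\rm{Hinv}}(V,f)$ and let $\pi_j : V \to \langle u_j\rangle$ denote the projection along the decomposition. Each $\pi_j$ commutes with $f$ because the decomposition is $f$-invariant, so hyperinvariance gives $\pi_j W \subseteq W$; as $\pi_j$ restricts to the identity on $\langle u_j\rangle$, the reverse inclusion $W \cap \langle u_j\rangle \subseteq \pi_j W$ also holds, hence $\pi_j W = W \cap \langle u_j\rangle$ and $W = \bigoplus_j (W \cap \langle u_j\rangle)$. Each intersection is $f$-invariant in the cyclic space $\langle u_j\rangle$, so equals $f^{r_j}\langle u_j\rangle$ for a unique $r_j \in \{0,\dots,t_j\}$ (with the convention $r_j = t_j$ when the intersection is zero). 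To verify $\vec r \in \mathcal{L}(\vec t\,)$, I would construct two families of explicit $f$-commuting endomorphisms shifting between adjacent cyclic blocks. The assignment $u_{j+1} \mapsto u_j$, $u_k \mapsto 0$ for $k \ne j+1$, extends to an $f$-commuting $g_j$ because $\e(u_j) = t_j \le t_{j+1} = \e(u_{j+1})$ is compatible with the annihilator of $u_{j+1}$; applying $g_j$ to $f^{r_{j+1}}u_{j+1} \in W$ yields $f^{r_{j+1}}u_j \in W \cap \langle u_j\rangle$, forcing $r_{j+1} \ge r_j$. Dually, $u_j \mapsto f^{t_{j+1}-t_j}u_{j+1}$ (zero on the other generators) extends to an $f$-commuting $h_j$ since $\e(f^{t_{j+1}-t_j}u_{j+1}) = t_j = \e(u_j)$; applying it to $f^{r_j}u_j \in W$ forces $f^{r_j + t_{j+1}-t_j}u_{j+1} \in W$, hence $r_j + t_{j+1} - t_j \ge r_{j+1}$, i.e., $t_{j+1} - r_{j+1} \ge t_j - r_j$.

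Part (iii) follows at once from (ii): $\vec r \preceq \vec s$ iff $f^{r_j}\langle u_j\rangle \supseteq f^{s_j}\langle u_j\rangle$ for every $j$, iff $W(\vec r\,) \supseteq W(\vec s\,)$. For the counting formula, substitute $s_i := t_i - r_i$ so that both sequences $(r_i)$ and $(s_i)$ are nondecreasing with $r_i + s_i = t_i$, and pass to the differences $d_i = r_i - r_{i-1}$ and $e_i = s_i - s_{i-1}$ (with $r_0 = s_0 = 0$); they satisfy $d_i, e_i \ge 0$ and $d_i + e_i = t_i - t_{i-1}$, and the choices at distinct indices are independent, so index $i$ contributes exactly $1 + t_i - t_{i-1}$ admissible pairs, giving $n_H(\vec t\,) = \prod_{i=1}^m (1 + t_i - t_{i-1})$. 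The one step that requires genuine care is the construction of the shift maps $g_j$ and $h_j$ and the verification that they commute with $f$; once the general principle that an assignment on generators extends to an $f$-equivariant endomorphism precisely when the exponents of image and preimage are compatible is in hand, every chain inequality is forced mechanically and the remainder is bookkeeping.
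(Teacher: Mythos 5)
Your proof is correct. Note that the paper itself does not prove this theorem: it attributes the result to Fillmore--Herrero--Longstaff and refers to Gohberg--Lancaster--Rodman for the proof, so there is no in-text argument to compare against; what you have written is a complete, self-contained proof of the standard kind. All the essential points check out: the computation $f^{r_i}\langle u_j\rangle\cap\Ker f^{t_i-r_i}=f^{\,r_i+\max(0,\,t_j-t_i)}\langle u_j\rangle$ inside a cyclic block is right, and the claim that $\min_i\bigl(r_i+\max(0,t_j-t_i)\bigr)=r_j$ is exactly where both defining inequalities of $\mathcal{L}(\vec t\,)$ enter (for $i\ge j$ one uses $r_i\ge r_j$; for $i\le j$ the term is $t_j-(t_i-r_i)\ge t_j-(t_j-r_j)=r_j$) --- you assert this without the one-line verification, but it is true. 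The converse direction via the projections $\pi_j$ and the two shift maps $g_j$, $h_j$ is the standard argument; your compatibility criterion ($u\mapsto w$ extends $f$-equivariantly on $\langle u\rangle$ iff $\e(w)\le\e(u)$) is correctly applied in both cases. One small point worth making explicit: when you conclude $r_{j+1}\ge r_j$ from $f^{r_{j+1}}u_j\in f^{r_j}\langle u_j\rangle$, the degenerate case $f^{r_{j+1}}u_j=0$ must be handled separately, but there $r_{j+1}\ge t_j\ge r_j$ anyway, and similarly for the dual inequality; likewise injectivity of $\vec r\mapsto W(\vec r\,)$ (needed for the count) follows from part (ii) together with the normalization $r_j\le t_j$. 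The counting argument via the differences $d_i=r_i-r_{i-1}$, $e_i=(t_i-r_i)-(t_{i-1}-r_{i-1})$ with $d_i+e_i=t_i-t_{i-1}$ is a clean bijection and gives \eqref{eq.nhnu} correctly.
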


Let $X_H$ denote the 
largest hyperinvariant subspace contained in  a characteristic subspace
$X$.  Using a generator tuple  
  $ U = (u_1, \dots , u_m)   \in \mathcal{U}$
one can give an explicit description of 
$X_H $. 
Let   $x \in V $  be decomposed as 
 \beq \label{eq.smxju}
x = x_1 + \cdots + x_m ,   \: x_i \in \langle u_i \rangle, \; i = 1,
\dots ,m, 
\eeq 
and let  \,$ \pi _{j} : V \to V $ be  the projections defined by $\pi_j x = x_j   $,
$ i = 1, \dots ,m$. 
 If  $X  \subseteq V $  
then  $\pi_j x   \in X $ for all $x \in X $ is equivalent to
\beq \label{eq.vlfgin}
\pi _j X = X \cap \langle u_j \rangle . 
\eeq 
The following theorem shows that  \eqref{eq.vlfgin} holds  
if $\e(u_j) = t $ and  $ \lambda ^t $ is a repeated elementary divisor.

\begin{theorem}  \label{thm.wogn} 
{\rm{\cite[Lemma 4.5, Lemma 4.2, Theorem 4.3]{AW5}}}
Suppose  $X $ is a characteristic subspace of $V$.
Let  $ U = (u_1, \dots , u_m) \in \mathcal{U}$.
\begin{itemize} \item[\rm{(i)}]
 If $d(t_j ) > 1 $ then 
$\pi_j X = X \cap \langle u_j \rangle $.
\item[\rm{(ii)}]
The subspace $X$ is hyperinvariant if and only if
\beq \label{eq.adin}
\pi_j X = X \cap \langle u_j \rangle, \: j = 1,  \dots, m,
\eeq
or equivalently,
\beq \label{eq.brhy}
X =  \oplus _{i=1} ^m \,  \big( X \cap \langle u_i  \rangle \big). 
\eeq
\item[\rm{(iii)}]
The subspace 
\beq \label{eq.larg}
X_H  = \oplus _{i=1} ^m \, \big( X \cap \langle u_i  \rangle \big)
\eeq
is the largest hyperinvariant subspace contained in $X$.
\end{itemize}
\end{theorem}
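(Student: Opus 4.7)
The plan is to prove the three parts in the order (i), (iii), (ii), because (ii) will drop out of (iii) together with the observation that each projection $\pi_j$ lies in $\End(V,f)$. The central technique throughout is the construction of $f$-commuting automorphisms of the form $\alpha = \iota + \sigma$, where $\sigma$ is a nilpotent $K[f]$-linear endomorphism sending a generator $u_j$ to a scalar multiple of $f^s u_k$; such an $\alpha$ lies in $\Aut(V,f)$ with inverse $\iota - \sigma$ whenever $\sigma^2 = 0$.

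For (i), suppose $d(t_j) > 1$, so that some $k \ne j$ satisfies $t_k = t_j$. Given $x = x_1 + \cdots + x_m \in X$ with $x_j = g_j(f) u_j$, I would introduce two elements of $\Aut(V,f)$: first, $\alpha = \iota + \sigma$ with the $K[f]$-linear map defined by $\sigma(u_j) = u_k$ and $\sigma(u_i) = 0$ for $i \ne j$, which commutes with $f$ because $t_k = t_j$ gives $\sigma(f^{t_j} u_j) = f^{t_j} u_k = 0$; second, the swap $\rho : u_j \leftrightarrow u_k$, fixing all other $u_i$, which also commutes with $f$ because $t_k = t_j$. From $\alpha(x) - x = \sigma(x) = g_j(f) u_k \in X$ one obtains $g_j(f) u_j = \rho(g_j(f) u_k) \in X$, which is exactly $x_j \in X$. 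Combining $\pi_j X \subseteq X$ with $\pi_j X \subseteq \langle u_j \rangle$ and the trivial reverse inclusion yields $\pi_j X = X \cap \langle u_j \rangle$.

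For (iii), write $X \cap \langle u_j \rangle = f^{s_j} \langle u_j \rangle$ with $0 \le s_j \le t_j$, and set $X_H = \oplus_j f^{s_j} \langle u_j \rangle$. Any hyperinvariant subspace $Y \subseteq X$ satisfies $Y = \oplus_j (Y \cap \langle u_j \rangle) \subseteq X_H$ by Theorem~\ref{thm.knwr}(ii) (using $\pi_j \in \End(V,f)$), so only the hyperinvariance of $X_H$ itself needs checking. By Theorem~\ref{thm.knwr} it is enough to verify $\vec s = (s_1, \dots , s_m) \in \mathcal{L}(\vec t\,)$. For indices $k < j$ (so $t_k \le t_j$) I would employ two further automorphisms: the map $\iota + \sigma_{jk}$, where the $K[f]$-linear $\sigma_{jk}$ sends $u_j \mapsto u_k$ and kills the other generators, applied to $f^{s_j} u_j \in X$ produces $f^{s_j} u_k \in X \cap \langle u_k \rangle$, forcing $s_k \le s_j$; and the map $\iota + \tau_{kj}$, where $\tau_{kj}(u_k) = f^{t_j - t_k} u_j$ and $\tau_{kj}$ is zero on the other generators, applied to $f^{s_k} u_k \in X$ produces $f^{s_k + t_j - t_k} u_j \in X \cap \langle u_j \rangle$, forcing $t_k - s_k \le t_j - s_j$. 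Both maps commute with $f$ (because $t_j \ge t_k$ in the first case and $\tau_{kj}(f^{t_k}u_k) = f^{t_j} u_j = 0$ in the second) and have vanishing square. In each of the two inequalities the degenerate subcase $s_i = t_i$, i.e.\ $X \cap \langle u_i \rangle = \{0\}$, has to be treated separately but makes the conclusion trivial.

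Part (ii) then follows immediately: for the forward direction $\pi_j \in \End(V,f)$ gives $X = \sum_j \pi_j X = \oplus_j (X \cap \langle u_j \rangle)$, while the backward direction is just $X = X_H$, which is hyperinvariant by (iii). The main obstacle is the verification step in (iii), namely that the weaker notion of automorphism invariance (characteristic), rather than endomorphism invariance (hyperinvariance), already produces both monotonicity conditions defining $\mathcal{L}(\vec t\,)$; picking the two automorphisms $\iota + \sigma_{jk}$ and $\iota + \tau_{kj}$ with the correct powers of $f$, and carefully tracking the degenerate cases where a summand $X \cap \langle u_i \rangle$ collapses to $\{0\}$, is where the technical care lies.
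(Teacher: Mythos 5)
Your proof is correct; note that the paper itself does not prove Theorem~\ref{thm.wogn} but imports it from \cite{AW5}, and your argument via the square-zero $K[f]$-homomorphisms $\sigma$ between cyclic summands (giving unipotent elements $\iota+\sigma$ of $\Aut(V,f)$), together with the swap $u_j\leftrightarrow u_k$ when $t_j=t_k$, is exactly the standard technique of that reference. The reduction of (ii) to (iii) via $\pi_j\in\End(V,f)$, and your handling of the degenerate cases $X\cap\langle u_i\rangle=0$ in verifying $\vec s\in\mathcal{L}(\vec t\,)$, are both sound.
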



In a characteristic subspace
 $X $  elements outside of $ X_H$ 
are of special interest (if they exist).

\begin{lemma}  \label {la.gfnc}  
Let   $X $  be a characteristic subspace. 
\begin{itemize} 
\item[\rm{(i)}]
If  $ W $ is a proper  subspace of $X$
then  $X = \langle  X \setminus W \rangle ^c $.
\item[\rm{(ii)}] 
If  $ X $ is not hyperinvariant then
 $X = \langle  X \setminus X_H \rangle ^c $
and
\beq \label{eq.xglxc}
 \langle  X \setminus X_H \rangle ^c = \langle  X \setminus X_H
\rangle .
\eeq
\end{itemize} 
\end{lemma}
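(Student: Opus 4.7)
My plan is to establish (i) by a short translation argument and then obtain (ii) as a direct consequence, with one extra observation that handles the second equality in \eqref{eq.xglxc}.

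For (i), one inclusion is immediate: since $X \setminus W \subseteq X$ and $X$ is characteristic, every automorphism $\alpha \in {\rm{Aut}}(V,f)$ carries elements of $X \setminus W$ into $X$, so $\langle X \setminus W \rangle^c \subseteq X$. For the reverse inclusion it suffices to show that every $w \in W$ lies in $\langle X \setminus W \rangle^c$. Because $W$ is a proper subspace of $X$, we may fix some $y \in X \setminus W$. For any $w \in W$ the vector $y + w$ lies in $X$, but it cannot lie in $W$, since otherwise $y = (y+w) - w$ would lie in $W$. Hence both $y$ and $y + w$ belong to $X \setminus W$, and the identity $w = (y + w) - y$ exhibits $w$ as an element of $\langle X \setminus W \rangle^c$ (a $K$-linear combination of two vectors of $X \setminus W$, each taken with the automorphism $\iota$).

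For (ii), Theorem~\ref{thm.wogn}(ii) combined with the non-hyperinvariance of $X$ shows that $X_H$ is a proper subspace of $X$. Applying (i) with $W = X_H$ yields the first equality $X = \langle X \setminus X_H \rangle^c$. To obtain the second equality in \eqref{eq.xglxc}, observe that the translation argument of (i) never invoked any automorphism other than the identity: every $w \in X_H$ was written as a plain $K$-linear combination $w = (y+w) - y$ of two elements of $X \setminus X_H$. Consequently $X$ is already contained in the $f$-invariant span $\langle X \setminus X_H \rangle$, which is in turn contained in both $X$ and $\langle X \setminus X_H \rangle^c$. All three subspaces therefore coincide.

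There is no substantive obstacle; the argument rests solely on the fact that $W$ (respectively $X_H$) is a proper linear subspace of $X$. The only subtle point is keeping straight the distinction between $\langle \cdot \rangle^c$ and $\langle \cdot \rangle$ in part (ii), but the translation trick shows that the characteristic hull of $X \setminus X_H$ collapses to the ordinary invariant span precisely because no nontrivial $f$-commuting automorphism is needed in the construction.
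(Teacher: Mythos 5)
Your proof is correct, and part (i) is exactly the paper's argument: the same translation trick of fixing $y\in X\setminus W$, noting $y+w\in X\setminus W$ for $w\in W$, and writing $w=(y+w)-y$. The only divergence is in how the second equality of \eqref{eq.xglxc} is obtained: the paper proves that $\alpha(X\setminus X_H)=X\setminus X_H$ for every $\alpha\in{\rm{Aut}}(V,f)$ (using $\alpha(X)=X$ and $\alpha(X_H)=X_H$), so the characteristic hull and the ordinary hull of $X\setminus X_H$ have the same generating set, whereas you observe that the translation argument already yields the sandwich $X\subseteq\langle X\setminus X_H\rangle\subseteq\langle X\setminus X_H\rangle^c\subseteq X$, forcing all three subspaces to coincide. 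Both routes are valid and of comparable length; yours is marginally more economical, while the paper's gives the slightly stronger set-theoretic fact that $X\setminus X_H$ is stable under ${\rm{Aut}}(V,f)$, which it reuses later (e.g.\ in the proof of Proposition~\ref{la.allen}).
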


\begin{proof}
(i) 
From
 $ X \setminus W   \subseteq X $
and $ X ^c = X $ follows
 $  \langle  X  \setminus W  \rangle ^c   \subseteq  X $.
Conversely,
if $ x \in  X $ then either  $ x \in  X \setminus W $
or $x \in W $.
In the first case it is obvious that
$ x \in  \langle   X \setminus W \rangle ^c $.
Suppose $ x \in  W  $.  Choose an element  $ z \in  X \setminus W $.
Then $ x + z  \in  X \setminus W  $.
Thus  $  z  \in  \langle   X \setminus W \rangle ^c  $ and 
$ x + z  \in  \langle   X \setminus W\rangle ^c  $, 
and therefore 
$ x \in  \langle   X \setminus W \rangle ^c  $.
Hence
$ X \subseteq   \langle   X \setminus W \rangle ^c  $,
which completes the proof.

(ii) 
Because of  $ X \supsetneqq X_H  $
 we can choose   $W =  X_H  $,   and obtain  
$X = \langle  X \setminus X_H \rangle ^c $. 
Let us show that
\beq  \label{eq.amb}
\alpha  (  X \setminus X_H ) =  X \setminus X_H
\: \: \,  
\text{for all} \: \: \,  \alpha   \in {\rm{Aut}}(V,f).
\eeq
Since $X_H$ is hyperinvariant and $X$ is characteristic
we  have   $  \alpha  ( X_H ) =  X_H $
and   $ \alpha  (X) = X $.
Consider  \,$ x \in X \setminus  X_H $.
Suppose   $ \alpha   x \in  X_H $.
Then  $ x \in \alpha ^{-1} ( X_H  ) =  X_H  $,
which is a contradiction.
It is obvious that \eqref{eq.amb} is equivalent to \eqref{eq.xglxc}.
\end{proof}

\subsection{Images under automorphisms} \label{sbst.imgn}

If $x \in V $ and   $ \alpha \in  {\rm{Aut}}(V,f) $ 
then 
it follows from  \eqref{eq.aprs} 
that  $ H( \alpha x ) =  H( x )  $. 
We note a converse result due to Baer. 

\begin{theorem}   \label{thm.cru}
{\rm{(See \cite{Kap}, \cite[p.\,4]{FuII})}}
   Let $x, y \in V$.
Then
$ H(x) = H(y) $ if and only if
$ y = \alpha x $ for some
 $ \alpha \in  {\rm{Aut}}(V,f) $.
\end{theorem}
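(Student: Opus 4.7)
The forward implication is immediate. Given $\alpha \in \Aut(V,f)$, for every $i\ge0$ we have $f^i\alpha x = \alpha f^i x$, and then \eqref{eq.aprs} gives $\h(f^i\alpha x)=\h(f^i x)$, whence $H(\alpha x)=H(x)$.

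For the converse, suppose $H(x)=H(y)$. My plan is to build $\alpha$ within a fixed cyclic decomposition of $V$. Fix a generator tuple $U=(u_1,\dots,u_m)\in \mathcal{U}$ with exponents $\e(u_i)=t_i$ arranged nondecreasingly, and write
\[
x=\sum_{i=1}^m p_i(f)u_i,\qquad y=\sum_{i=1}^m q_i(f)u_i,
\]
with $p_i,q_i\in K[\lambda]$ of degree less than $t_i$. The identity $H(x)=H(y)$ should translate into a combinatorial matching between the two tuples of polynomials, the correct statement of which is the content of the argument.

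The central step would be a normal-form lemma: there exists $\beta\in\Aut(V,f)$ such that $\beta x$ has a canonical expansion depending only on $H(x)$. Applying the same lemma to $y$ produces $\beta'$ with $\beta' y$ in the same canonical form, and then $\alpha=(\beta')^{-1}\beta$ satisfies $\alpha x=y$. The lemma is proved by induction on $\e(x)$: at each stage the height sequence dictates which cyclic summand is the natural home for the leading component of $x$, and one applies elementary $f$-commuting transformations of the type $u_i \mapsto u_i + r(f)u_j$ (compare the matrix $A$ in Example~\ref{ex.sh}) to eliminate the remaining components. Theorem~\ref{thm.knwr}(ii) controls which cyclic summands supply vectors at a prescribed height and exponent, and the Ulm invariants $d(f,r)$ enter as the dimensions of the matching spaces.

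The principal obstacle is handling gaps in the height sequence: if $\h(f^{j+1}x)>\h(f^j x)+1$, then $f^j x$ is not confined to a single cyclic summand, and the reduction must track components across several blocks of differing exponents. One has to verify, first, that a matching between the $p_i$ and $q_i$ exists, which rests on $H(x)=H(y)$ forcing compatible dimensions of the spaces $f^rV\cap V[f^s]$, and second, that the composed transformation is invertible. For the latter the key point is that the elementary modifications can all be taken of the form $\mathrm{id}_V+N$, with $N$ mapping each $u_i$ into $fV$, so their composition is unipotent and therefore lies in $\Aut(V,f)$.
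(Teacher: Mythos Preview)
The paper does not supply a proof of this theorem; it is quoted from the literature (Kaplansky, Fuchs) and attributed to Baer, so there is no argument in the paper to compare yours against.

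Your forward implication is correct and complete. Your converse, however, is a plan rather than a proof: the normal-form lemma is only asserted, the induction on $\e(x)$ is described in one sentence, and the handling of gaps in $H(x)$ is explicitly flagged as an unresolved obstacle. As written this is an outline of the classical approach, not a proof.

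There is also one concrete inaccuracy. You claim that all required elementary modifications can be taken of the form $\iota+N$ with $N(u_i)\in fV$, so that the composite is unipotent. This is too restrictive. To move a component between two cyclic summands $\langle u_i\rangle$ and $\langle u_j\rangle$ of the \emph{same} exponent $t_i=t_j$ one needs maps such as $u_i\mapsto u_i+u_j$ with $u_j\notin fV$; and since the theorem at this point of the paper is stated over an arbitrary field $K$, one also needs scalings $u_i\mapsto c\,u_i$ with $c\in K\setminus\{0,1\}$ (consider $x=u_1$, $y=c\,u_1$). These maps are invertible for straightforward reasons, but not for the unipotent reason you give, so your invertibility argument does not cover the cases that actually arise. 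The overall strategy is sound and is essentially the classical one, but the inductive bookkeeping---especially at gaps, where the leading component genuinely spreads over several blocks of different exponents---is the entire substance of Baer's theorem and is absent here.
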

We shall use  Baer's theorem  in Lemma~\ref{la.unrpes}
 to determine the set 
$ \{\alpha u ; \alpha \in  {\rm{Aut}}(V,f)  \} $
for   generators  $ u $ of $V$.
With regard to the proof of Lemma~\ref{la.unrpes} 
we put together basic facts on  exponent and height.


If  $ x \in V$,  $x \ne 0$,  and $ \e(x) = k $,   then
$  \e(f^j x ) = k - j $,  $j = 0, 1, \dots, k  -1$.
The height of $  f^j x  $ satisfies the inequality
$
\h(f^j x ) \ge  j +  \h(x) $,  
 $ j = 0, 1, \dots, k  -1$. 
If $x_1, \dots , x_m \in V $ then  
\beq \label{eq.hmn} 
\h(x_1 + \cdots + x_m) \ge
\min\{ \h(x_i) ; 1 \le i \le m \} .
\eeq
In general,
the inequality \eqref{eq.hmn} is strict. 
Consider Example  \ref{ex.sh}
with
$ x_1 = e_3 + e_1 $, $x_2 = e_4 + e_1 $, and
 $ \h(  x_1 ) =  \h(  x_2 ) = 0 $ and $ \h(x_1 + x_2 ) = 1$. 
We have equality in \eqref{eq.hmn} if the vectors $x_i$ satisfy the 
assumption of the following lemma.

\begin{lemma} \label{la.mxmn}
Let
\,$ U = ( u_1, \dots , u_m) \in  \mathcal{U} $.
If
\,$
  x = \sum  \nolimits _{i = 1} ^m\, x_i $, \,$x_i \in \langle u_i \rangle $,
$i = 1, \dots , m$, $x \ne 0 $,
then 
\beq \label{eq.hegl}
 \h(x) = \min \{ \h(x_i);  \: 1 \le i \le m ,\, x_i \ne 0 \}
\eeq
and  
\beq \label{eq.epngl}
 \e(x) = \max \{ \e(x_i);  \: 1 \le i \le m ,  \, x_i \ne 0 \}.
\eeq
\end{lemma}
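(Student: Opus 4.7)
The plan is to derive both identities directly from the two structural facts that the decomposition $V = \langle u_1 \rangle \oplus \cdots \oplus \langle u_m \rangle$ is direct and that each summand $\langle u_i \rangle$ is $f$-invariant. Neither identity requires the ordering of exponents in $U$; the ordering is used only to recognize $U$ as a generator tuple.

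For the exponent identity \eqref{eq.epngl}, I would simply apply powers of $f$ to the decomposition. Since $f^k x_i \in \langle u_i \rangle$, directness gives
\[
 f^k x = 0 \:\: \LR \:\: f^k x_i = 0 \: \text{ for every } i .
\]
The smallest $k$ for which the left-hand side holds is therefore the maximum over $i$ (with $x_i \ne 0$) of the smallest $k$ annihilating $x_i$, which is precisely \eqref{eq.epngl}.

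For the height identity \eqref{eq.hegl}, the inequality
\[
 \h(x) \ge \min \{\h(x_i);\, 1 \le i \le m,\, x_i \ne 0 \}
\]
is a particular instance of \eqref{eq.hmn}. For the reverse inequality, set $q = \h(x)$, so that $x = f^q y$ for some $y \in V$. Decomposing $y = y_1 + \cdots + y_m$ with $y_i \in \langle u_i \rangle$ and using the $f$-invariance of each summand, I obtain
\[
 x = f^q y = \sum\nolimits_{i=1}^{m} f^q y_i, \quad f^q y_i \in \langle u_i \rangle .
\]
The directness of the decomposition then forces $x_i = f^q y_i$ for every $i$, so $\h(x_i) \ge q$ whenever $x_i \ne 0$. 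This yields $\min\{\h(x_i) : x_i \ne 0\} \ge q = \h(x)$, completing the proof.

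There is no real obstacle here; the cautionary Example~\ref{ex.sh} preceding the lemma already shows that the invariance of the summands is exactly what is missing in the general case where strict inequality can occur in \eqref{eq.hmn}. The only subtlety worth flagging explicitly is that the height is witnessed by some preimage $y$ whose components, thanks to the invariance of the $\langle u_i\rangle$, produce preimages of the individual $x_i$; this is where the directness of the decomposition does the work.
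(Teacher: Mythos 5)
Your proof is correct and follows essentially the same route as the paper: the exponent identity via applying $f^{\ell}$ and using directness of the sum, and the height identity by combining the easy inequality \eqref{eq.hmn} with the observation that a preimage $y$ of $x$ under $f^{q}$ decomposes componentwise, forcing $x_i = f^q y_i$. No gaps.
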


\begin{proof}
To prove the identity \eqref{eq.hegl}  we set
$  \tilde{q}  =   \min  \{ \h(x_i); \, x_i \ne 0 \}  $ and  $q =\h(x )$.
Then $ q \ge \tilde{q} $.  On the other hand we have
$ x = f^q y$, $ y = \sum y_i $, $ y_i \in  \langle u_i \rangle $.
Hence $ x_i = f^q y_i $ for all $i$, and therefore $  \tilde{q} \ge q $.

With regard to \eqref{eq.epngl}  we
set  $ \tilde{\ell}  =    \max   \{ \e(x_i); \,  x_i \ne 0 \}$  
and $\ell =  \e(x) $.
Then $0 = f^{\ell} x = \sum _{i = 1} ^m f^{\ell}  x_i $
implies $  f^{\ell}  x_i = 0 $ for all $i$. Hence
$  \tilde{\ell}  \le \ell$.
On the other hand
\[
0 \ne f^{ \ell- 1}  x = \sum \nolimits  _{i = 1} ^m f^{\ell -1}  x_i
\]
implies
 $ f^{  \ell- 1}  x_j \ne 0 $ for some $j$, $1 \le j \le k$,
and therefore
$  \tilde{\ell} \ge \ell$.
\end{proof}

We remark  that
the preceding lemma  can be deduced from
 results on  marked subspaces in \cite{Bru}.


\begin{lemma} \label{la.jtgn} 
Suppose $\lambda  ^t $ is an elementary divisor of $f$. 
Then  
$ u $ is a generator of $V$ with $ \e(u) = t $ if and only if  $ f^t u = 0$
and 
\beq \label{eq.zmhm} 
 \h(f^j u ) = j ,  \:  j = 0, 1 , \dots , t-1 .
\eeq
\end{lemma}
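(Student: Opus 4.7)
\emph{Proof plan.} I would prove both implications, with the backward direction leveraging Baer's theorem (Theorem~\ref{thm.cru}).

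For the forward direction, assume $V = \langle u \rangle \oplus V_2$ with $V_2 \in {\rm{Inv}}(V,f)$ and $\e(u)=t$, so that $f^tu=0$ is immediate. The inequality $\h(f^ju)\ge j$ always holds, and to exclude strict inequality I would argue by contradiction: suppose $\h(f^j u) \ge j+1$ for some $j \in \{0,\dots,t-1\}$, so $f^j u = f^{j+1} w$ for some $w \in V$. Decomposing $w = p(f)u + w'$ with $p(f)u \in \langle u \rangle$ and $w'\in V_2$, the equality $f^j u - f^{j+1}p(f)u = f^{j+1}w'$ places the left-hand side in $\langle u\rangle$ and the right-hand side in $V_2$, so both must vanish, giving $f^j u = f^{j+1}p(f)u$. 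Expanding $p(f)u$ in the basis $u,fu,\dots,f^{t-1}u$ of $\langle u\rangle$ and comparing the coefficient of $f^j u$ on the two sides produces the contradiction $1=0$.

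For the backward direction, suppose $f^t u = 0$ and $\h(f^j u) = j$ for $j=0,\dots,t-1$; in particular $\e(u)=t$. Because $\lambda^t$ is an elementary divisor of $f$, a standard cyclic (Jordan-type) decomposition of $V$ furnishes a generator tuple $(u_1,\dots,u_m)\in\mathcal{U}$ with $\e(u_k) = t$ for some index $k$. Applying the just-proved forward direction to the generator $u_k$ yields $H(u_k) = (0,1,\dots,t-1,\infty,\dots,\infty) = H(u)$. Baer's theorem then supplies $\alpha \in {\rm{Aut}}(V,f)$ with $u = \alpha u_k$, and applying $\alpha$ to the decomposition $V = \langle u_k\rangle \oplus \bigoplus_{i\ne k}\langle u_i\rangle$ produces $V = \langle u\rangle \oplus V_2$ with invariant complement $V_2 = \alpha\bigl(\bigoplus_{i\ne k}\langle u_i\rangle\bigr)$, exhibiting $u$ as a generator.

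The principal care needed is that the forward argument be self-contained, so that it can be re-applied to $u_k$ inside the backward proof; the coefficient-comparison above accomplishes this without invoking any additional structure. Beyond that, the proof relies only on Baer's theorem, the existence of a cyclic decomposition, and the elementary properties of exponent and height.
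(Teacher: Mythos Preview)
Your proof is correct and follows essentially the same approach as the paper's: the forward step uses the direct-sum decomposition $V=\langle u\rangle\oplus V_2$ to force the height equality, and the backward step invokes Baer's theorem (Theorem~\ref{thm.cru}) after applying the forward direction to a known generator with exponent $t$. The only cosmetic difference is that the paper checks just $j=t-1$ (observing that $\h(f^{t-1}u)=t-1$ already forces $\h(f^j u)\le j$ for all smaller $j$), whereas you treat each $j$ directly via the coefficient comparison.
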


\begin{proof} 
Suppose   $ u $ is a generator  and 
\beq \label{eq.sinw} 
V = \langle u  \rangle \oplus V_2 \quad \text{and} \quad  \e(u) = t .
\eeq 
Let 
$ \h(  f ^{t-1} u ) = (t-1) + \tau $, $ \tau \ge 0 $. Then  
$  f ^{t-1} u =  f ^{t-1 + \tau} \tilde{w} $
for some $ \tilde{w} = w + w_2$ with $ w \in  \langle u  \rangle $, 
$ w_2 \in  V_2 $.  Then $ \langle u  \rangle \cap V_2 = 0 $ 
implies  $ f ^{t-1} u =  f ^{t-1 + \tau}  w $, and we obtain $\tau  = 0 $.
Hence $ \h(f^{t-1} u ) =  t-1 $, which  is 
 equivalent to \eqref{eq.zmhm}.

 Now  suppose $ u \in V $ satisfies  
\eqref{eq.zmhm}.
Let $ v \in V $ be a generator corresponding to $ \lambda ^t $
such that $ V = \langle v \rangle \oplus W_2 $ and $ \e(v) = t $. 
We have shown before that 
$ H(v) = (0, 1,  \dots, t-1, \infty , \dots , \infty) $.
Hence 
 $ H(u ) = H(v) $, and therefore 
Theorem~\ref{thm.cru} implies 
$ u = \alpha v $ for some $ \alpha \in  {\rm{Aut}}(V,f) $. 
Then  
$ V = \alpha (    \langle  v  \rangle \oplus  W_2 )  = 
 \langle u  \rangle \oplus  \alpha  W_2  $ shows that 
$ u $ is also a generator. 
\end{proof}

A  consequence of Lemma~\ref{la.jtgn}  is the following observation. 
Suppose  $u$ is a generator of $V$ and $ w \in  \langle u \rangle $
and $ \h(w) = 0 $.
Then  $  \langle w  \rangle  =  \langle  u  \rangle $.

\begin{lemma} \label{la.unrpes} 
Suppose $ \lambda ^t $ is an unrepeated  elementary divisor of $f$.
Let  $U =  (u_1, \dots , u_m) \in \mathcal{U} $, and $ \e(u_p) = t $.
If  $ u \in V $ then the  following statements are equivalent.
\begin{itemize}
 \item[\rm{(i)}]
The vector $ u $ is a generator of $V$ with $\e(u) = t $.
 \item[\rm{(ii)}] 
There exists an  $  \alpha \in  {\rm{Aut}}(V,f)  $ such that 
$ u  = \alpha u_p $. 
 \item[\rm{(iii)}] 
We have 
\begin{multline}  \label{eq.nrudx}
u = y +  g  \:\: 
 with \:\:  y = c u_p + f v,  \: c \ne 0 , \: v \in \langle u_p  \rangle, 
\\  \: g \in
 \langle u_i ;  \;   i \ne p  \rangle [f ^{t}  ]  . 
\end{multline}
\end{itemize}
\end{lemma}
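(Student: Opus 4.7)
The plan is to establish (i) $\Leftrightarrow$ (ii) and (i) $\Leftrightarrow$ (iii); the main tools are Lemma~\ref{la.jtgn}, Baer's theorem (Theorem~\ref{thm.cru}), and Lemma~\ref{la.mxmn}. For (i) $\Leftrightarrow$ (ii), Lemma~\ref{la.jtgn} applied to $u_p$ yields $H(u_p) = (0, 1, \dots, t-1, \infty, \dots, \infty)$, and the same lemma applied to an arbitrary $u$ shows that (i) is equivalent to $H(u) = H(u_p)$; Baer's theorem then identifies this with (ii).

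For (iii) $\Rightarrow$ (i), I verify the two conditions of Lemma~\ref{la.jtgn}. The identity $f^t u = 0$ is immediate from $f^t u_p = 0$, $f^{t+1} v = 0$ (since $\e(v) \le t$), and $f^t g = 0$. Because $c \ne 0$ and $y = c u_p + f v \in \langle u_p \rangle$ satisfies $\e(y) = t$, a dimension count gives $\langle y \rangle = \langle u_p \rangle$, so $y$ itself is a generator of $V$ and Lemma~\ref{la.jtgn} provides $\h(f^j y) = j$ for $j = 0, \dots, t-1$. Decomposing $g = \sum_{i \ne p} g_i$ with $g_i \in \langle u_i \rangle$, the vectors $f^j y$ and $f^j g_i$ lie in distinct summands of $V = \bigoplus_i \langle u_i \rangle$, and each satisfies $\h(f^j g_i) \ge j$, so Lemma~\ref{la.mxmn} gives $\h(f^j u) = j$ for $j = 0, \dots, t-1$; then Lemma~\ref{la.jtgn} confirms (i).

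For (i) $\Rightarrow$ (iii), I decompose $u = x_1 + \cdots + x_m$ in $\bigoplus_i \langle u_i \rangle$ and set $y = x_p$, $g = \sum_{i \ne p} x_i$. Lemma~\ref{la.mxmn} yields $\e(u) = \max_i \e(x_i) = t$, so $f^t x_i = 0$ for all $i$, giving $g \in \langle u_i ;\; i \ne p \rangle[f^t]$. Writing $y = c u_p + f v$ with $v \in \langle u_p \rangle$, the remaining and main task is to prove $c \ne 0$; this is where the unrepeated hypothesis enters decisively, and I expect this to be the principal obstacle. Suppose for contradiction $c = 0$, and set $t' = \max \{ t_i : i \ne p, \, t_i < t, \, x_i \ne 0 \}$, with the convention $t' = 0$ if this set is empty. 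Then $t' < t$, so $\e(u) = t$ forces $f^{t'} u \ne 0$ and Lemma~\ref{la.jtgn} gives $\h(f^{t'} u) = t'$. On the other hand, $f^{t'} u = f^{t'+1} v + \sum_{i \ne p} f^{t'} x_i$, and I estimate each summand in this direct-sum decomposition: $\h(f^{t'+1} v) \ge t' + 1$; for $i \ne p$ with $t_i > t$ (the unrepeated hypothesis excludes $t_i = t$ for $i \ne p$), the constraint $\e(x_i) \le t$ forces $x_i \in f^{t_i - t} \langle u_i \rangle$, so $\h(f^{t'} x_i) \ge t' + (t_i - t) \ge t' + 1$; for $i$ with $t_i < t$, $f^{t'} x_i = 0$ by the choice of $t'$. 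Lemma~\ref{la.mxmn} then yields $\h(f^{t'} u) \ge t' + 1$, contradicting $\h(f^{t'} u) = t'$. Hence $c \ne 0$, completing the proof.
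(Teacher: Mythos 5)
Your proof is correct and follows essentially the same route as the paper: Baer's theorem (Theorem~\ref{thm.cru}) combined with Lemma~\ref{la.jtgn} for (i)$\Leftrightarrow$(ii), and a height computation in the decomposition $V=\oplus_i\langle u_i\rangle$ --- hinging on the fact that unrepeatedness forces $t_i\ne t$ for $i\ne p$, so every off-$p$ component either dies under a suitable power of $f$ or has height too large --- for the equivalence with (iii). The only cosmetic difference is that in (i)$\Rightarrow$(iii) the paper obtains the contradiction by applying $f^{t-1}$ to $u$, whereas you apply $f^{t'}$ for your chosen $t'$; both succeed for the same reason.
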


\begin{proof} 
If $u$ and $ u_\rho $ are generators of $V$ 
then  we have $ \e(u) = e(u_\rho)$  if and only if 
$ H(u) = H( u_\rho) $.  Hence 
  it follows from Theorem~\ref{thm.cru} and Lemma~\ref{la.jtgn}
 that (i) and (ii) are equivalent. 

(i) $\Ra $ (iii) Let $u$  be decomposed such that   $ u  = y + g $ 
and   $y \in \langle u_p  \rangle $ and 
$ g \in
 \langle u_i ;  \;   i \ne p  \rangle    $.  
Then \eqref{eq.epngl} implies 
$ \e(g)  \le  \e(u) = t $, that is  $ g \in V[f^{t}]$.
Let us show that
$\h(y) = 0 $, or equivalently 
\beq \label{eq.xtoa} 
  y = c u_p + f v,  \: c \ne 0 , \: v \in \langle u_p  \rangle . 
\eeq 
We have $  g = g_{<} + g_{>} $ with
$   g_{<} \in   \langle u_i ; \,  i < p \rangle $,
and  
$g_{>}  \in  
   \langle u_i ; \,   i > p \rangle  $.  
If $ i < p $  then $ \e(u_i) < \e(u_p) =  t $. 
Hence  $ f^{t -1 }  g_{<} = 0 $, and 
\beq \label{eq.neigf}
 f^{t -1 } u  =    f^{t-1 }  y +   f^{t -1 }    g_{>}   .
\eeq 
If  $ i > p $   then   \,$  \e(u_i) > t $. 
Therefore 
\[
  \langle u_i \rangle[  f^{t}   ] = f^{ \e(u_i)  - t}  \langle u_i\rangle
\subseteq f V .
\]
Hence    $\h(  g_{>}    )  \ge 1$ and     \,$\h(f^{t -1}     g_{>}     ) \ge  t $. 
Now suppose  $ \h(y) \ne 0 $. 
Then $ \h(y) \ge 1 $  and  \mbox{$  \h (  f^{t-1 }  y ) \ge t $}. Therefore
\eqref{eq.neigf} implies 
$\h( f^{t -1 } u ) \ge t  $.
This is a contradiction to the  assumption that $ u $ is a generator with 
 $ \h( f^{t -1 } u )  = t-1 $.  Hence $ \h(y) = 0 $. 

(iii) $\Ra $  (i) 
Assume 
\eqref{eq.nrudx}.
Then 
\eqref{eq.xtoa}   
implies that $y$ is a generator with 
$ \e(y) = t $. Thus
 $ \h(y) = 0 $ and $ \h( f^{t-1} y)  = t - 1 $.  Moreover, we have  $ \e(g) \le t $. 
Hence  $ \e(u) =  \max \{ \e(y) ,  \e(g) \}   =  t $.
From \eqref{eq.hegl} follows  
\[
 t-1 \le \h( f^{t -1} u ) =  \min \{ \h( f^{t-1} y) , 
 \h( f^{t-1} g) \}  \le  \h( f^{t-1} y)  = t - 1 .
\]
Hence $  \h( f^{t-1} u)  = t - 1  $.  Then Lemma~\ref{la.jtgn} 
completes the proof. 
 \end{proof}

Lemma  \ref{la.unrpes} 
 will be used in the proof of  Proposition \ref{la.rgwpo}. 
We note that the assumption   $|K| = 2 $  implies that the vector 
$y$ in \eqref{eq.nrudx} 
is of the form 
\beq  \label{eq.spk2} 
y =  u_{\rho} + fv , \; v  \in \langle u_\rho \rangle . 
\eeq 

\medskip 

If $u$ is a generator with $ \e(u) = t$ then there is no gap in 
the indicator sequence 
$H(f^j u ) = ( j , j +1 , \dots , t-1, \infty ,\dots , \infty) $. 
We mention without proof that
$ \langle x \rangle ^c $ is hyperinvariant if and only if
$H(x) $ has no gap. 
We only need the following special case of that 
result.

\begin{lemma} \label{la.dinch} 
Let $\lambda ^t $ be an unrepeated elementary divisor of $f$
and   $ u $   a generator of $V$ with $\e(u) = t $.
Then 
\beq \label{eq.glaml} 
\langle f^j  u \rangle ^c = \IIm f^j \cap  \Ker f^{t -j} , \:  j = 0 ,
\dots , t ,
\eeq
and  $ \langle f^j  u \rangle ^c $ is hyperinvariant. 
\end{lemma}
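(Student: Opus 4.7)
Plan: I would prove the displayed equality~\eqref{eq.glaml} directly; the hyperinvariance of $\langle f^j u\rangle^c$ is then automatic, since the right-hand side, being the intersection of the hyperinvariant subspaces $\IIm f^j$ and $\Ker f^{t-j}$, is itself hyperinvariant. The inclusion $\langle f^j u\rangle^c \subseteq \IIm f^j \cap \Ker f^{t-j}$ is immediate: the right side is characteristic (being hyperinvariant) and contains $f^j u$, because $\e(u)=t$ gives $f^{t-j}(f^j u) = f^t u = 0$.

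For the reverse inclusion I would fix a generator tuple $U=(u_1,\dots,u_m)\in\mathcal{U}$ with $u_p=u$ and $t_p=t$; since $\lambda^t$ is unrepeated, $p$ is the unique such index. Expanding image and kernel along $V=\bigoplus_i\langle u_i\rangle$ yields the component-wise identity
\[
\IIm f^j \cap \Ker f^{t-j} \;=\; f^j\langle u_p\rangle \,\oplus\, \bigoplus_{i\neq p} f^j\bigl(\langle u_i\rangle[f^t]\bigr),
\]
after verifying in each cyclic factor that $f^j\langle u_i\rangle \cap \langle u_i\rangle[f^{t-j}] = f^j(\langle u_i\rangle[f^t])$ for $i\neq p$, while the $p$-th factor collapses to $f^j\langle u_p\rangle = \langle f^j u\rangle$ because $\e(u_p)=t$.

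I then show each summand on the right sits in $\langle f^j u\rangle^c$. The $p$-th summand $\langle f^j u\rangle$ is obvious. For $i\neq p$ and any $g_i \in \langle u_i\rangle[f^t]$, Lemma~\ref{la.unrpes}(iii)$\Rightarrow$(ii) (applied with $y=u_p$, i.e.\ $c=1$ and $v=0$, and $g=g_i$ in~\eqref{eq.nrudx}) supplies some $\alpha \in \Aut(V,f)$ with $\alpha u = u + g_i$. Then $\alpha(f^j u) = f^j u + f^j g_i \in \langle f^j u\rangle^c$, and subtracting the element $f^j u$ leaves $f^j g_i \in \langle f^j u\rangle^c$. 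Letting $g_i$ range over $\langle u_i\rangle[f^t]$ and using that $\langle f^j u\rangle^c$ is a $K$-subspace gives $f^j(\langle u_i\rangle[f^t]) \subseteq \langle f^j u\rangle^c$ for every $i\neq p$, completing the reverse inclusion.

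The main obstacle is the cyclic-factor identity $f^j\langle u_i\rangle \cap \langle u_i\rangle[f^{t-j}] = f^j(\langle u_i\rangle[f^t])$ used in the decomposition: both sides evaluate to $f^{\max(j,\,t_i-t+j)}\langle u_i\rangle$, but one must separate the cases $t_i\leq t$ (where $\langle u_i\rangle[f^t]=\langle u_i\rangle$) and $t_i>t$ (where $\langle u_i\rangle[f^t] = f^{t_i-t}\langle u_i\rangle$). Once this decomposition is in place, the remainder of the argument is a direct invocation of Lemma~\ref{la.unrpes}.
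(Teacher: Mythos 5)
Your proof is correct and rests on the same key ingredient as the paper's, namely the description of $\{\alpha u;\ \alpha\in\Aut(V,f)\}$ from Lemma~\ref{la.unrpes} together with the decomposition of $\Ker f^{t}$ along a generator tuple containing $u$. The only difference is organizational: the paper first proves the case $j=0$, i.e.\ $\langle u\rangle^{c}=\Ker f^{t}$, and then obtains the general case by applying $f^{j}$ (via $\langle f^{j}u\rangle^{c}=f^{j}\langle u\rangle^{c}$ and $f^{j}\Ker f^{t}=\IIm f^{j}\cap\Ker f^{t-j}$), whereas you verify both inclusions directly for each $j$ through the componentwise intersection identity.
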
 

\begin{proof} 
Let $U =  (u_1, \dots , u_m) \in \mathcal{U} $ such that   $ \e( u_p ) = t $
and $ u_p  = u$. 
Then 
 \[
 \Ker f ^t  =   \langle  u_p  \rangle
 \bigoplus  \big( \oplus _{1 \le i \le m ; i \ne p }   
 \langle  u_i  \rangle  [f^t]  \big) 
\]
and 
$ \langle  u _p  \rangle  =  \big\langle c  u _p  + f  v  ; c \ne
0 ,\; v \in \langle  u _p  \rangle    \big\rangle  $.
Hence 
\begin{multline*} 
\Ker f ^t  =  \langle  c  u _p   + f  v + g ;   c \ne
0 ,\; v \in \langle  u _p   \rangle , \: g \in  \langle  u _i ; i
\ne p \rangle [f ^t ] \rangle = \\ 
 \langle  \alpha u_p ;  \alpha \in    {\rm{Aut}}(V,f)  \rangle
=   
\langle  u_p  \rangle ^c . 
\end{multline*}
 If  $0 < j < t$ 
then 
\[
\langle  f^j   u _p  \rangle ^c =  f^j \langle     u _p  \rangle ^c
= 
f^j \Ker f^ t = f^j \Ker f^{ t -j } = 
\IIm f^j \cap  \Ker f^{ t -j } .
\]
\end{proof}

A general theorem  that contains the following lemma can 
be found in  \cite[Lemma 65.4, p.\ 4]{FuII}. 

\begin{lemma} \label{la.baer}
Suppose 
$V =  \langle u_1 \rangle \oplus  \langle u_2 \rangle $
and $ \e(u_1) < \e(u_2)$.
If  $x \in V $ then there exists an automorphism $ \alpha \in    {\rm{Aut}}(V,f)$
such that 
\beq \label{eq.brap} 
 \alpha x =  f^{k_1} u_1 +  f^{k_2} u_2 
\eeq 
for some $k_1, k_2 \in \N_0$.
\end{lemma}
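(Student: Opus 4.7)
The plan is to apply Baer's theorem (Theorem~\ref{thm.cru}): since two vectors with the same indicator differ by an $f$-commuting automorphism, it suffices to exhibit $k_1, k_2 \in \mathbb{N}_0$ such that the candidate $y := f^{k_1}u_1 + f^{k_2}u_2$ satisfies $H(x) = H(y)$, and then invert the resulting automorphism.

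Set $t_i := \e(u_i)$ and split $x = x_1 + x_2$ with $x_i \in \langle u_i \rangle$. For each nonzero $x_i$, expand $x_i = \sum_{j \ge \ell_i} c_j f^j u_i$ with $c_{\ell_i} \ne 0$ and $\ell_i \le t_i - 1$, and set $k_i := \ell_i$; when $x_i = 0$, set $k_i := t_i$ so that $f^{k_i} u_i = 0$. The central intermediate step is to verify that $\h(f^j x_i) = \ell_i + j$ for $0 \le j < t_i - \ell_i$ whenever $x_i \ne 0$. To this end, factor $x_i = f^{\ell_i} y_i$ where $y_i := c_{\ell_i} u_i + c_{\ell_i+1} f u_i + \cdots \in \langle u_i \rangle$ has $\e(y_i) = t_i$ because its leading coefficient $c_{\ell_i}$ is nonzero. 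Then $\langle y_i \rangle \subseteq \langle u_i \rangle$ has dimension $t_i$, so $\langle y_i \rangle = \langle u_i \rangle$ and $V = \langle y_i \rangle \oplus \langle u_{3-i} \rangle$; hence $y_i$ is itself a generator of $V$, and Lemma~\ref{la.jtgn} gives $\h(f^j y_i) = j$ for $0 \le j < t_i$, which yields $\h(f^j x_i) = \ell_i + j$ as required.

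With $k_i$ fixed as above, Lemma~\ref{la.mxmn} applied to both $x = x_1 + x_2$ and $y = f^{k_1}u_1 + f^{k_2}u_2$ relative to the generator tuple $(u_1, u_2)$ gives, for every $j$,
\[
 \h(f^j x) \,=\, \min_{i \in \{1,2\}} \bigl( \ell_i + j \bigr) \,=\, \min_{i \in \{1,2\}} \bigl( k_i + j \bigr) \,=\, \h(f^j y),
\]
with the convention that $+\infty$ dominates finite heights. The exponent formula of Lemma~\ref{la.mxmn} similarly yields $\e(x) = \e(y)$, so the indicators terminate at the same index and $H(x) = H(y)$. Theorem~\ref{thm.cru} then supplies $\beta \in {\rm{Aut}}(V,f)$ with $\beta y = x$, and $\alpha := \beta^{-1}$ is the desired automorphism.

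I do not anticipate a substantive obstacle. The one delicate point is the identification of heights computed within the cyclic summand $\langle u_i \rangle$ with heights in the ambient space $V$, which is handled above by observing that the ``leading factor'' $y_i$ is itself a generator of $V$, thereby reducing everything to Lemma~\ref{la.jtgn}. Note that the hypothesis $\e(u_1) < \e(u_2)$ plays no essential role in the argument; it serves only to fix an unambiguous ordering of the two generators, and may be convenient in later applications of the lemma to impose a normalization such as $k_1 \le k_2$.
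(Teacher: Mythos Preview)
Your proof is correct, but it takes a different route from the paper. Both arguments begin the same way: write $x = x_1 + x_2$ with $x_i \in \langle u_i\rangle$, and observe that each nonzero $x_i$ equals $f^{k_i}y_i$ for some height-zero $y_i \in \langle u_i\rangle$, which is therefore a generator of $V$. At this point the paper proceeds directly: since $(y_1,y_2)$ is itself a generator tuple (with the same exponents as $(u_1,u_2)$), the map $\alpha$ determined by $\alpha y_i = u_i$ lies in ${\rm Aut}(V,f)$ and sends $x$ to $f^{k_1}u_1 + f^{k_2}u_2$. You instead use the generator property of each $y_i$ only to compute heights via Lemma~\ref{la.jtgn}, feed these into Lemma~\ref{la.mxmn} to verify $H(x) = H(f^{k_1}u_1 + f^{k_2}u_2)$, and then invoke Baer's theorem (Theorem~\ref{thm.cru}) to produce the automorphism. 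The paper's approach is shorter and more elementary, since it constructs $\alpha$ explicitly rather than appealing to the existence result of Theorem~\ref{thm.cru}; your approach, on the other hand, makes the indicator computation fully explicit and would generalize without change to more than two cyclic summands. Your closing remark about the hypothesis $\e(u_1) < \e(u_2)$ is accurate: neither proof uses it.
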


\begin{proof}
Let $x = x_1 + x_2 $, $ x_i \in   \langle u_i \rangle $, $ i = 1,2$.
Suppose 
$ x_1 \ne 0 $ and $ x_2 \ne 0 $.
If 
$\h(x_i) =k_i   $ 
then   $x_i = f^{k_i} \tilde{u}_i $, $ \tilde{u}_i \in    \langle u_i \rangle $,
$ \h( \tilde{u}_i) = 0 $,  $ i = 1,2$. 
Therefore $  \langle \tilde u_i \rangle  = \langle  u_i \rangle $, $ i = 1,2$,
such that 
\mbox{$ (  \tilde{u}_1,   \tilde{u}_2 ) \in \mathcal{U} $}.
Then 
$ \alpha :   (  \tilde{u}_1,   \tilde{u}_2 )    \mapsto 
 (  u_1,   u_2 )  
 \in    {\rm{Aut}}(V,f)$
yields 
\eqref{eq.brap}.  
\end{proof}

\subsection{Proof of  Theorem~\ref{thm.pfle}(ii)}

In this section we assume  $\dim \Ker f = 2$ such that
 \,$ V = \langle u _1 \rangle \oplus  \langle  u_2 \rangle $.
 We prove two propositions. They 
  provide the complete description of the
set   ${\rm{Chinv}} (V,f) \setminus  {\rm{Hinv}} (V,f)$
in Theorem~\ref{thm.pfle}(ii). 
The following notation will be convenient. 
If we  write
\[
\alpha :  (u_1, u_2)  \mapsto (\hat u _1 , \hat u _2  ) \in {\rm{Aut}}(V,f) 
\]
we assume  $ (\hat u _1 , \hat u _2  ) \in \mathcal{U} $, 
and  
$ \alpha $ 
denotes the  automorphism 
in $ {\rm{Aut}}(V,f) $
 defined by $ ( \alpha u_1,  \alpha  u_2)  =  (\hat u _1 , \hat u _2  )$.

\begin{prop}  \label{la.allen}
Let $|K| = 2 $. 
Suppose $V = \langle u_1\rangle \oplus \langle   u_2 \rangle $ and $\e(u_1) = R$,
$\e(u_2) = S$, 
 and $ R + 1 < S $.
If  $ X $
is a characteristic non-hyperinvariant subspace of  $V$
then 
\begin{multline}  \label{eq.chldr}
X = \langle  f^ {R - s} u_1  + f^ {S -q  } u_2 \rangle ^c  \quad 
with
 \\
 \quad 0<  s< q \quad \hbox{and} \quad 0 \le  R - s < S - q , 
\end{multline} 
and 
\beq \label{eq.hxnru} 
X_H =  \langle  f^ {R - s+ 1} u_1  ,  f^ {S -q +1 } u_2 \rangle
= 
\IIm  f^ {R - s+ 1} \cap \Ker f^{ q - 1 } 
\eeq 
is the largest hyperinvariant subspace contained in $X$. 
\end{prop}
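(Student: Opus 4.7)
The plan is to start with an arbitrary element of $X \setminus X_H$, normalize it by an automorphism in $\Aut(V,f)$, and then exploit characteristic invariance to pin down both $X_H$ and $X$ itself. Since $X$ is not hyperinvariant, Lemma~\ref{la.gfnc} ensures $X \setminus X_H \ne \emptyset$; I pick $x \in X \setminus X_H$ and decompose $x = x_1 + x_2$ with $x_i \in \langle u_i \rangle$. Both components must be nonzero, for otherwise $x$ lies in a single cyclic summand, hence in $X \cap \langle u_i \rangle \subseteq X_H$. Lemma~\ref{la.baer} then produces $\alpha \in \Aut(V,f)$ with $\alpha x = f^{R-s}u_1 + f^{S-q}u_2$ for some $1 \le s \le R$ and $1 \le q \le S$; since $X$ and $X_H$ are both characteristic, $\alpha x \in X \setminus X_H$, so I may replace $x$ by $\alpha x$ and assume $x$ already has this canonical form.

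Next I verify $s < q$ and $R - s < S - q$ by contradiction. If $s \ge q$, a direct Ulm-sequence computation using Lemma~\ref{la.mxmn} yields $H(x) = H(f^{R-s}u_1)$; if $s < q$ but $R - s \ge S - q$, the factorization $x = f^{S-q}(u_2 + f^j u_1)$ with $j = (R-s)-(S-q) \ge 0$, combined with the fact (Lemma~\ref{la.unrpes}) that $u_2 + f^j u_1$ is still a generator of exponent $S$, gives $H(x) = H(f^{S-q}u_2)$. In either case Baer's Theorem~\ref{thm.cru} supplies $\gamma \in \Aut(V,f)$ with $\gamma^{-1}x$ lying in one of the cyclic summands $\langle u_i \rangle$; then $\gamma^{-1}x \in X \cap \langle u_i \rangle \subseteq X_H$ forces $x \in X_H$, a contradiction. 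For the identification of $X_H$, I apply to $x$ the automorphism $u_1 \mapsto u_1 + fu_1$, $u_2 \mapsto u_2$ (valid by Lemma~\ref{la.unrpes}, since $(u_1 + fu_1, u_2)$ is still a generator tuple) and obtain $\alpha x - x = f^{R-s+1}u_1 \in X$; symmetrically $f^{S-q+1}u_2 \in X$. Hence $\langle f^{R-s+1}u_1 \rangle \oplus \langle f^{S-q+1}u_2 \rangle \subseteq X_H$. Conversely, if $f^{R-s}u_1$ were in $X$ then $f^{S-q}u_2 = x - f^{R-s}u_1$ would also be in $X$, forcing $x \in X_H$; so $X \cap \langle u_1 \rangle = \langle f^{R-s+1}u_1 \rangle$ and $X \cap \langle u_2 \rangle = \langle f^{S-q+1}u_2 \rangle$. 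Theorem~\ref{thm.wogn}(iii) then gives the first expression for $X_H$, while Theorem~\ref{thm.knwr}(ii) applied to $(r_1, r_2) = (R-s+1, S-q+1)$, which lies in $\mathcal{L}((R,S))$ by the inequalities above, identifies this subspace with $\IIm f^{R-s+1} \cap \Ker f^{q-1}$.

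To conclude $X = \langle x \rangle^c$, I first expand a general $\alpha \in \Aut(V,f)$ via the parametrization from Lemma~\ref{la.unrpes} (with $c = 1$ since $|K| = 2$) and check that each of the four summands of $\alpha x - x$ falls into $X_H$, so $\alpha x \in x + X_H$ for every $\alpha$, yielding $\langle x \rangle^c = \langle x \rangle + X_H \subseteq X$. For the reverse inclusion, I take any $y \in X \setminus X_H$ and run the same argument for $y$, obtaining parameters $(s', q')$ with $X_H = \langle f^{R-s'+1}u_1 \rangle \oplus \langle f^{S-q'+1}u_2 \rangle$; uniqueness of $X_H$ forces $(s', q') = (s, q)$, so $\h(y_1) = R - s$ and $\h(y_2) = S - q$. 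Since $|K| = 2$, these height equalities force $y_1 \in f^{R-s}u_1 + \langle f^{R-s+1}u_1 \rangle$ and $y_2 \in f^{S-q}u_2 + \langle f^{S-q+1}u_2 \rangle$, so $y - x \in X_H$, and consequently $y \in x + X_H \subseteq \langle x \rangle^c$, completing the inclusion $X \subseteq \langle x \rangle^c$. The main obstacle is precisely this last step: showing that the parameter pair $(s, q)$ is intrinsic to $X$ rather than to the particular $x$ chosen, which collapses all of $X \setminus X_H$ onto the single coset $x + X_H$.
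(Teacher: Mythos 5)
Your proposal is correct and follows essentially the same route as the paper's proof: pick $x\in X\setminus X_H$, normalize it via Lemma~\ref{la.baer} to $f^{R-s}u_1+f^{S-q}u_2$, use the automorphism $u_1\mapsto u_1+fu_1$ (and its analogue for $u_2$) to identify $X_H=\langle f^{R-s+1}u_1\rangle\oplus\langle f^{S-q+1}u_2\rangle$, and conclude $X=\langle z\rangle^c$. The only local differences are that you obtain the strict inequalities $s<q$ and $R-s<S-q$ by an Ulm-sequence computation plus Theorem~\ref{thm.cru} (forcing $x\in X_H$, a contradiction), where the paper exhibits explicit automorphisms and invokes Lemma~\ref{la.dinch} to contradict non-hyperinvariance of $X$, and that you close the argument by showing $X\setminus X_H\subseteq x+X_H\subseteq\langle x\rangle^c$ rather than citing Lemma~\ref{la.gfnc}; both variants are valid.
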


\begin{proof} 
Theorem~\ref{thm.wogn}(iii)  and Theorem~\ref{thm.knwr}(ii)  imply 
  \beq \label{eq.nxhm} 
  X_H  = (   X \cap \langle u_1 \rangle ) 
\oplus (  X \cap \langle u_2\rangle )
= W(\vec r \, )    
= \langle  f ^{r_1} u_1 \rangle   \oplus  \langle  f ^{r_2}   u_2 \rangle 
\eeq
for some pair  $\vec r =  (r_1, r_2) $ satisfying
$r_1 \le R $, $ r_2 \le S$,  and
\beq \label{eq.dnhy}  
0 \le r_1 \le r_2  \quad {\rm{and}} \quad
0 \le R - r_1  \le S - r_2 .
\eeq
Since $    X $ is not hyperinvariant we have $    X \supsetneqq   X_H  $.
Let $x \in     X \setminus    X_H $. 
By  Lemma~\ref{la.baer}
there exists an
automorphism $\alpha \in {\rm{Aut}}(V,f) $  be such that
\beq \label{eq.rlops} 
\alpha x  =   z =   
 f^{\mu_1} u_1 + f^{\mu_2}  u_2 \;  \in  \;    X \! \setminus \!    X_H . 
\eeq 
Suppose
 $ \mu_1 \ge r_1 $.  Then
\,$ f^{\mu_1} u_1 \in  \langle f^{r_1} u_1    \rangle  
 \subseteq   X $, 
and because   $  z \in    X$  also    $  f^{\mu_2} u_2  =  z - f^{\mu_1} u_1   \in    X $. 
Hence $ f^{\mu_1} u_1 \in    X \cap \langle u_1 \rangle $ and 
 $ f^{\mu_2} u_2 \in    X \cap \langle u_2 \rangle $, 
and we would obtain $ z \in X_H$.   Similarly, it is impossible that
$ \mu_2 \ge r_2$.
Therefore
\beq \label{eq.rqbd} \mu_1 < r_1 ,  \: \: \mu_2 < r_2 .   \eeq
We shall see that  
\beq \label{eq.vrlt}
  \mu_1 + 1 =  r_1 , \quad   \mu_2  + 1=  r_2  .
\eeq
If  $R = 1$ then $\mu_1 = 0 $,  
and $\mu_1 + 1 =  r_1 =1 $.
If $R > 1 $ then  $f u_1 \ne  0  $, and  
\[
\beta : (u_1 , u_2) \mapsto (u_1 + fu_1, u_2)   \in {\rm{Aut}}(V,f) 
\]
yields
 \[
 \beta   z = 
( f^{\mu_1}u_1 +  f^{\mu_1 +1}u_1)  +  f^{\mu_2}u_2    =    z +   f^{\mu_1 +1}u
_1 \in   X.
\]
Since $   X $ is characteristic and 
  $  z  \in    X $, we have  $ \beta  z \in    X$. Hence    
\[
 f^{\mu_1 +1}u_1  \in    X \cap \langle u_1 \rangle =  
\langle  f^{r_1}  u_1 \rangle ,
\]
and we obtain  $ \mu_1  + 1 \ge r_1 $.
 A similar argument yields,  $ \mu_2  + 1 \ge r_2 $. 
Hence \eqref{eq.rqbd} implies the relations  \eqref{eq.vrlt}.
Thus
\[
  z =  f^{\mu_1 }u_1 + f^{\mu_2 }u_2,  
\: \mu_1 = r_1 - 1,  \; \mu_2 = r_2 -1 .
\]
Hence  there exists a unique
 vector $z \in    X \setminus     X_H$
with a representation  \eqref{eq.rlops}. 
Then \eqref{eq.amb} 
implies 
$  X \setminus  X_H = \{ \alpha z ;  \alpha \in  {\rm{Aut}}(V,f) \} $ 
and \eqref{eq.xglxc}  yields 
\beq \label{eq.ksptu} 
X =  \langle   z \rangle ^c =  \langle    f^{\mu_1}  u_1 +   f^{\mu_2 }  u_2\rangle^ c .
\eeq 
 According to the definitions of $s$ and $t$ we have 
\,$ 0 \le \mu_1 \le \mu_2$
 and  \,$0 \le  R- \mu_1 \le S - \mu _2 $. 
Hence it remains to show that  
\beq  \label{eq.cnglu} 
\mu_1 \ne R , \:  \mu_1 \ne \mu_2 \:\: {\rm{and}}  \:\:  
R - \mu_1 \ne S - \mu_2.
\eeq 
Suppose $ R = \mu _1$.  Then $z = f^{\mu_2}u_2 $,
and  Lemma~\ref{la.dinch} implies $ X = 
\langle    f^{\mu_1}  u_2 \rangle ^c  \in {\rm{Hinv}}(V, f)  $.
Suppose $ \mu _1 = \mu_2  $.
Then  $  z =  f^{\mu_1} (u_1 + u_2 )  $. 
Using  
\[
\gamma : (u_1, u_2 ) \mapsto ( u_1,   u_2 +  u_1 ) \in  {\rm{Aut}}(V,f) 
\]
we obtain  
$ \gamma ^{-1}  z =  f^{\mu_1}  u_2 \in  X $.
Hence Lemma~\ref{la.dinch} implies 
$
 X = \langle    f^{\mu_1}  u_2 \rangle ^c \in {\rm{Hinv}}(V, f)  $.
Suppose $R- \mu_1 =  S - \mu _2 $. Then 
$ S - (\mu_2 - \mu_1) = R $ and 
\[
z = f^{\mu_1} ( u_1 + f^{\mu_2 - \mu_1} u_2 )  =  f^{\mu_1} ( u_1  + f^{S - R } u_2 ) .
\]
Therefore 
\,$
\sigma :  (u_1, u_2 ) \mapsto ( u_1  + f^{S - R } u_2 ,   u_2 ) \in  {\rm{Aut}}(V,f)  
$\, 
yields 
$ \sigma ^{-1} z = f^{\mu_1} u_1 $.
Then
$X = \langle    f^{\mu_1}  u_1 \rangle ^c  \in {\rm{Hinv}}(V, f)  $.
Hence the inequalities \eqref{eq.cnglu} are valid.

From \eqref{eq.nxhm}  
and  \eqref{eq.vrlt}
follows 
$ X_H =  \langle    f^{\mu_1+ 1}  u_1 ,  f^{\mu_2 + 1}  u_2\rangle $.
Moreover 
\eqref{eq.cnglu} implies 
$  \langle    f^{\mu_1+ 1}  u_1 ,  f^{\mu_2 + 1}  u_2\rangle 
=
\IIm f ^{\mu_1 + 1 } \cap \Ker f^{S -(\mu_2 + 1)} $. 

We obtain \eqref {eq.chldr}
and \eqref{eq.hxnru} 
if we 
set $  \mu_1 =  R - s $ and $ \mu_2 = S -q $. 
\end{proof}

\begin{prop} \label{la.rgwpo}  
Assume $|K| = 2 $. 
Suppose $V = \langle u_1\rangle \oplus \langle   u_2 \rangle $, and 
$\e(u_1) = R$,
$\e(u_2) = S$ such that 
$ R + 1 < S $.
Let $s, q$ be integers satisfying 
\beq \label{eq.vrzs}
0 <  s < q , \: \:   0 \le R -s  < S -q .  
\eeq 
Then  the subspace 
\[
X =\langle f^{R -s } u_1 + f^{S -q } u_2  \rangle ^c 
\]
is characteristic and not hyperinvariant, 
and 
 \beq  \label{eq.sncw} 
X   = \langle  f^{R -s } u_1 + f^{S -q } u_2 ,  \;
 f^{R -s  +1} u_1 , \; f^{S -q  + 1} u_2 \rangle .  
\eeq
We have 
\beq \label{eq.dmax}
 \dim X = s + q -1 . 
\eeq
If $s > 1 $ then 
$f_{|X} $ has the  elementary divisors $ \lambda ^q $ and 
$\lambda ^{s - 1} $. If $s = 1 $ then 
$X =  \langle   f^{R -s } u_1 + f^{S -q } u_2  \rangle $
and the corresponding elementary divisor is   $ \lambda ^q $.   
\end{prop}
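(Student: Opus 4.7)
Set $z = f^{R-s}u_1 + f^{S-q}u_2$, so $X = \langle z\rangle^c$ is characteristic by construction. The heart of the argument is the explicit formula \eqref{eq.sncw}; once that is in hand, the remaining claims follow by bookkeeping.

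The plan for \eqref{eq.sncw} is to parametrize $\alpha z$ for $\alpha \in \Aut(V,f)$ using Lemma \ref{la.unrpes}. Both $\lambda^R$ and $\lambda^S$ are unrepeated, so (using $|K|=2$, together with $\langle u_2\rangle[f^R] = f^{S-R}\langle u_2\rangle$ and $\langle u_1\rangle[f^S] = \langle u_1\rangle$) that lemma yields
\[
\alpha u_1 = u_1 + f v_1 + f^{S-R}w_1, \qquad \alpha u_2 = u_2 + f v_2 + w_2,
\]
with $v_i \in \langle u_i\rangle$, $w_1 \in \langle u_2\rangle$ and $w_2 \in \langle u_1\rangle$. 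A direct computation gives
\[
\alpha z = z + f^{R-s+1}v_1 + f^{S-q+1}v_2 + f^{S-s}w_1 + f^{S-q}w_2.
\]
The last two terms land in $\langle f^{R-s+1}u_1, f^{S-q+1}u_2\rangle$ precisely because $s<q$ gives $S-s \ge S-q+1$, and $R-s<S-q$ gives $S-q \ge R-s+1$. This yields $X \subseteq \langle z, f^{R-s+1}u_1, f^{S-q+1}u_2\rangle$. For the reverse inclusion I exhibit two specific automorphisms $\beta:(u_1,u_2) \mapsto (u_1+fu_1, u_2)$ and $\gamma:(u_1,u_2) \mapsto (u_1, u_2+fu_2)$ in $\Aut(V,f)$; then $\beta z - z = f^{R-s+1}u_1 \in X$ and $\gamma z - z = f^{S-q+1}u_2 \in X$ (when $s=1$ the first vector is simply $0$, consistent with $X = \langle z\rangle$).

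To show $X$ is not hyperinvariant, set $X_H := \langle f^{R-s+1}u_1, f^{S-q+1}u_2\rangle$. By Theorem \ref{thm.knwr}(ii) this subspace is hyperinvariant, the tuple $(R-s+1, S-q+1)$ lying in $\mathcal{L}(\vec{t}\,)$ thanks to $R-s<S-q$. A representation $z = a f^{R-s+1}u_1 + b f^{S-q+1}u_2$ would project on $\langle u_1\rangle$ to $f^{R-s}u_1 = a f^{R-s+1}u_1$, contradicting $\h(f^{R-s}u_1) = R-s$ from Lemma \ref{la.jtgn}. Hence $z \notin X_H$, so $X \supsetneq X_H$, and Theorem \ref{thm.wogn}(ii) forces $X \notin {\rm{Hinv}}(V,f)$.

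For the dimension and elementary divisors, note $\dim \langle f^{R-s+1}u_1\rangle = s-1$ and $\dim \langle f^{S-q+1}u_2\rangle = q-1$ sit in independent summands, so $\dim X_H = s+q-2$; adjoining $z \notin X_H$ gives $\dim X = s+q-1$. For the Segre structure when $s>1$, I would verify that the sum $X = \langle z\rangle + \langle f^{R-s+1}u_1\rangle$ is direct by projecting any linear dependence onto $\langle u_1\rangle$ and $\langle u_2\rangle$ separately and using independence of $\{f^{R-s}u_1,\dots,f^{R-1}u_1\}$ and $\{f^{S-q}u_2,\dots,f^{S-1}u_2\}$; the two cyclic summands have exponents $\e(z) = \max(s,q) = q$ and $s-1$ (by Lemma \ref{la.mxmn}), giving elementary divisors $\lambda^q$ and $\lambda^{s-1}$. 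When $s=1$, already $fz = f^{S-q+1}u_2 \in \langle z\rangle$, so $X = \langle z\rangle$ is cyclic with single elementary divisor $\lambda^q$. The main technical obstacle is the second paragraph: getting the correct explicit form of $\alpha u_1, \alpha u_2$ out of Lemma \ref{la.unrpes} and carefully tracking which powers of $f$ annihilate which cyclic summand, since this is precisely where the hypothesis $R-s+1 \le S-q$ gets used.
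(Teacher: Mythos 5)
Your derivation of \eqref{eq.sncw} is correct and is essentially the paper's argument: the paper first passes to the height-zero element $\tilde z = u_1 + f^{S-\tilde q}u_2$ with $\tilde q = R+(q-s)$ and then applies $f^{R-s}$, whereas you apply Lemma \ref{la.unrpes} to $z$ directly; both routes use the inequalities $s<q$ and $R-s<S-q$ in exactly the same place, namely to push $f^{S-s}w_1$ into $f^{S-q+1}\langle u_2\rangle$ and $f^{S-q}w_2$ into $f^{R-s+1}\langle u_1\rangle$. The dimension count and the identification of the elementary divisors are also fine.

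The one genuine gap is the non-hyperinvariance step. From ``$X$ properly contains the hyperinvariant subspace $X_H=\langle f^{R-s+1}u_1, f^{S-q+1}u_2\rangle$'' you cannot conclude that $X$ is not hyperinvariant: a hyperinvariant subspace can perfectly well contain smaller hyperinvariant subspaces properly. To invoke Theorem \ref{thm.wogn}(ii) you must show $X\neq (X\cap\langle u_1\rangle)\oplus(X\cap\langle u_2\rangle)$, and your argument only shows $z\notin X_H$; a priori $X\cap\langle u_1\rangle$ could be larger than $\langle f^{R-s+1}u_1\rangle$. The repair is short and uses material you already have: if $X$ were hyperinvariant, then $z\in X$ together with $X=(X\cap\langle u_1\rangle)\oplus(X\cap\langle u_2\rangle)$ would force $f^{R-s}u_1\in X$ and $f^{S-q}u_2\in X$, hence $\dim X\ge s+q$, contradicting $\dim X=s+q-1$. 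Equivalently, this shows $\pi_1 z=f^{R-s}u_1\notin X$ for the $f$-commuting projection $\pi_1$ onto $\langle u_1\rangle$ along $\langle u_2\rangle$, which is exactly how the paper concludes. With that sentence added, your proof is complete.
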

\medskip

\begin{proof} 
Define 
$z =  f^{R -s } u_1 + f^{S -q } u_2 $.
Then $ X = \langle  z  \rangle ^c $.
Set 
$ \tilde{q} = R + ( q - s) $ 
and  
\[
 \tilde{z} =  u_1 + f^{S -\tilde q} u_2  \quad \text{and} \quad 
\tilde X =  \langle  \tilde{z}  \rangle ^c .
\] 
Then 
\,$ 
(R- s) + (S - \tilde q ) = S - q$.
Therefore $ z = f ^{R-s }   \tilde z $ and $ X =  f^{R-s }  \tilde X $, 
and \eqref{eq.vrzs} is equivalent to
\beq \label{eq.turd} 
 R <  \tilde q    ,   \:   \:  0 <  S - \tilde q  .
\eeq 

Let us first deal with  the height-zero space
$ \tilde X =   \langle  \tilde{z}  \rangle ^c $
and then pass to  $  X =   \langle  z   \rangle ^c $. 
 Let $ \alpha    \in {\rm{Aut}}(V,f) $.
We determine  $ \alpha  \tilde{z} $ using  Lemma~\ref{la.unrpes}. 
Recall  that $ | K | = 2 $ implies \eqref{eq.spk2},  that is 
we have 
$ y = u_\rho + fv $, $ v \in \langle   u_\rho   \rangle $ in 
 \eqref{eq.nrudx}.
If
$\alpha  u_1   =  x_1 + x_2  $,
\,$x_i \in
\langle u_i \rangle$, $ i = 1, 2 $, 
 then 
\[
 x_1=    u_1+ f v_1, \,
 v_1  \in  \langle u_1 \rangle
\quad \text{and} \quad  
 x_2 \in  \langle u_i;  i \ne 1 \rangle [f ^R ]  
=  \langle u_2  \rangle [f ^R ]   = f^{S - R }   \langle u_2  \rangle.
\]       
Similarly,
$ \alpha u_2    = y_1 + y_2 $, 
$ y_i \in   
\langle u_i \rangle$, $ i = 1,2 $,
and 
\[
 y_2 =   u_2 +  f  w_2,  \, w_2 \in
  \langle u_2 \rangle \quad \text{and} \quad  
  y_1\in   
\langle u_i;  i \ne 2 \rangle [f ^S ] 
= \langle u_1 \rangle [f ^ S] =  \langle u_1 \rangle   .
\] 
Then \
\begin{multline} 
\alpha  \tilde z= 
 ( u_1+ f v_1  +  x_2 ) + 
 (   f^{S -\tilde q } u_2 +  f^{S -\tilde q + 1  }w_2 +  f^{S -\tilde q } y_1 ) = \\
 \tilde z +(  f v_1  +  f^{S -\tilde q } y_1 ) 
 + (   x_2 +  f^{S - \tilde q + 1  }w_2 ) .
\end{multline}  
From  $ S - \tilde q > 0 $ follows 
$   f^{ S -\tilde  q   } y_1 \in f  \langle u_1  \rangle $. 
From  $ \tilde q > R $ follows  $S - R \ge S - \tilde q + 1$, and
therefore $
x_2 \in  f ^{  S - \tilde q +1  } \langle u_2\rangle  $. 
Set \,$
 \hat{v}  =   f  v_1 + f^{ S -\tilde q   } y_1$\,
and  \,$\hat{w} =  x_2+  f^{S -\tilde q +1} w_2 $.
Then
\beq \label{eq.smspq} 
 \alpha \tilde  z =  \tilde  z+
 \hat{v} + \hat{w},  \quad {\rm{and}} \quad
 \hat{v} \in f \langle u_1 \rangle, \,
\hat{w} \in   f^{S -\tilde q +1}  \langle u_2\rangle . 
\eeq
Define  \,$\tilde L  =  \langle \tilde z,   f^{S -\tilde q  + 1} u_2 \rangle $.
Because   of $ f \tilde z =  f  u_1 +  f^{S - \tilde q  + 1} u_2  $ 
we obtain  
 \,$
\tilde L   =  \langle \tilde   z,
 f u_1  ,  f^{S - \tilde q  + 1} u_2 \rangle $. 
Then  
\eqref{eq.smspq} implies 
$  \langle \tilde  z \rangle  ^c \subseteq \tilde L $. 
If 
\[ 
   \beta : (u_1 , u_2 ) \mapsto  (u_1  , u_2 + fu_2 ) \in {\rm{Aut}}(V,f) 
\]
then 
$    \beta  \tilde  z     =  \tilde z+ f^{S - \tilde q  + 1} u_2  \in   \langle
 \tilde z \rangle  ^c $. 
  Hence
\[  \beta \tilde  z    -  \tilde  z =    f^{S - \tilde q  + 1} u_2  \in   
\langle \tilde  z \rangle  ^c  ,
\]
and therefore 
$\tilde L   \subseteq \langle \tilde  z \rangle  ^c $, and  we obtain
\beq \label{eq.leqt}
 \tilde L =  \langle   \tilde  z \rangle  ^c = \langle \tilde   z,
 f u_1  ,  f^{S - \tilde q  + 1} u_2 \rangle  . 
\eeq

We determine the dimension of  $   \langle \tilde z \rangle  ^c $.
If $ R = 1 $ then $ f u_1 = 0 $ and 
 $ f \tilde   z =  f^{S  -  \tilde q  + 1} u_2 $.  Therefore 
$\langle \tilde z \rangle  ^c  =  \langle  \tilde z \rangle  $ and 
$\dim  \langle  \tilde  z \rangle  = \e( \tilde z) =  \tilde q $. 
If $ R > 1 $ then  $\langle \tilde z \rangle  ^c  =   \langle\tilde z ,
f u_1 \rangle $. 
Let 
\[
x \: =  \:  \sum  \nolimits _{\mu = 0} ^{R-1} c_{\mu} f^{\mu} u_1 
+ 
  \sum  \nolimits _{\mu = 0 } ^{\tilde q -1} c_{\mu} 
 f^{S -  \tilde q +\mu}   u_2  \: 
\in  \:  \langle  \tilde z \rangle .
\]
Then $ x \in  \langle f u_1 \rangle $ if and only if 
$ c_0 = \cdots = c_{\tilde q-1} = 0 $, that is, $x = 0 $. 
Hence  $  \langle \tilde  z \rangle \cap  
\langle f u_1 \rangle = 0 $, and 
\beq \label{eq.vuebs}
\dim  \langle \tilde  z \rangle  ^c  =    \dim \langle \tilde z \rangle + 
\dim \langle f u_1 \rangle   = \tilde q + (R -1).
\eeq

At this point we go back to   
$ X =\langle    z \rangle  ^c =
 f^{R-s} \langle \tilde  z \rangle $.
Then \eqref{eq.leqt} yields  \eqref{eq.sncw}.  
From \eqref{eq.vuebs} we obtain 
\begin{multline*}
 \dim X =   \dim  f^{R-s}  \langle   \tilde z \rangle + 
\dim  f^{R-s}   \langle f u_1 \rangle = 
 [ \tilde q - (R-s) ]  +
[ ( R- 1) - (R-s ) ]  = \\
s + q  -1, 
\end{multline*}
which proves \eqref{eq.dmax}. 
If $ s > 1 $ then $X = \langle   z \rangle \oplus 
\langle   f ^{R -s +1}  u_1  \rangle $
is a direct sum of $f$-cyclic subspaces of dimension $ q $ and 
$ s -1$, respectively.  
Hence, in that case,  the elementary divisors of $f_{|X } $ are
$ \lambda ^ q $ and $ \lambda ^{s-1} $. 
It is easy to see   that the case  $ s = 1 $ leads to $ X =  
 \langle   z \rangle $. 

We show next  that  $    f^{R -s } u_1  \notin   X $. 
 Suppose to the contrary that
 $  f^{R -s } u_1 \in X $. 
Then 
$ z  =  f^{R -s } u_1 +  f^{S - q} u_2  \in  X $    would imply
  $  f^{S - q} u_2 \in  X  $.
 Hence $  \langle  f^{R -s } u_1  \rangle  \oplus  \langle  f^{S - q} u_2 \rangle
\subseteq X $, and therefore 
$\dim X \ge s + q $, in contradiction to \eqref{eq.dmax}.
Hence   $  f^{R -s } u_1 \notin X$. 
Let $  \pi _1 $ be the projection of $V$ on $    \langle u_1   \rangle $ 
along  $    \langle u_2   \rangle $. 
Then  $ \pi _1 \in {\rm{End}}(V,f) $.
But $  \pi _1  z=   f^{R -s } u_1 \notin  X$. 
Hence  the subspace  $X$ 
is not hyperinvariant. 
\end{proof} 

\bigskip

\noindent
{\bf{Example~\ref{ex.sh} continued.}}
If $ ( R , S ) = (1, 3) $  
then  $(s, q ) = (1, 2 ) $ is the 
only solution of \eqref{eq.vrzs}. 
Then $ (R-s, S - q ) = (0 , 1 ) $, and  
 $ X =  \langle  f^0 e_1 + f^1 e_2  \rangle^c  $
is the  only characteristic  non-hyperinvariant subspace of  $V$.
According to 
\eqref{eq.nhnu} there are $6$ hyperinvariant subspaces in
$ V$.  \hfill  $\square$

\section{Separating repeated and unrepeated elementary divisors}
 \label{sct.eg}

According to  Shoda's theorem only unrepeated elementary divisors 
are rele\-vant for the existence  of  characteristic non-hyperinvariant
subspaces. In this section we examine this fact in more detail.
Let $ E $ and $G$ be invariant subspaces of $V$ and  assume 
\begin{multline} \label{eq.hnngt} 
V= E \oplus G \quad  
\text{and} \quad 
d( f,  t ) = d( f_{|E} , t  ) \quad \text{if} \quad  d( f,  t )  =  1 \quad 
 \text{and} \\  
 d (f, t ) = d( f_{|G}, t  ) \quad \text{if} \quad  d (f, t )  > 1. 
\end{multline}
Thus the unrepeated elementary divisors of $f$ are those of 
$  f_{|E}$
and the repeated ones are those of  $f_{|G} $. 
If \eqref{eq.hnngt}
 holds then there exists a generator tuple $ U = ( u_1, \dots , u_m) $
adapted to $E$ and $G$ 
such that 
\beq \label{eq.asgnt}
E  =  \langle  u_i ; \e(u_i) = t_i ; d(t_i) = 1 \rangle \quad {\rm{and}} 
\quad G = \langle  u_i ; \e(u_i) = t_i ; d(t_i) > 1 \rangle . 
\eeq  
We shall see that 
a characteristic subspace $X $ is  hyperinvariant   in $V$ 
if and only if  
$X \cap E $  is  hyperinvariant in $E$.
We first consider a general direct sum decomposition
of $V$.

\begin{lemma} \label{la.hncho} 
Let $V_1$ and $V_2 $ be 
 invariant 
subspaces of $V$ such that  $ V = V _1 \oplus V_2 $. 
\begin{itemize}
\item[\rm{(i)}]
 If $X \in {\rm{Chinv}}(V,f)$
then
$ ( X \cap V_i)   \in  {\rm{Chinv }}( V_i ,f_{|V_i}) $, $i = 1,2$.
\item[\rm{(ii)}]
If   $X \in {\rm{Hinv}}(V,f)$
then
$ ( X \cap V_i)    \in  {\rm{Hinv}}( V_i, f_{|V_i} )$, $i = 1,2$.
\item[\rm{(iii)}] 
A subspace  $X$ 
 is hyperinvariant if and only if
$X$ is characteristic and 
\beq \label{eq.smun}
X = ( X \cap V_1 )  \oplus ( X \cap V_2 )  \quad and 
\quad   X \cap V_i \in {\rm{Hinv}} (V_i ,f_{|V_i}) , \; i = 1,2.
\eeq
\end{itemize}
\end{lemma}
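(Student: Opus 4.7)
The plan is to prove all three parts by lifting endomorphisms and automorphisms of the summands $V_1, V_2$ to $V$ via the direct-sum decomposition, and conversely by restricting via the coordinate projections $\pi_i: V \to V_i$, both of which commute with $f$ because the $V_i$ are $f$-invariant.

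For (i), given $\alpha_i \in {\rm{Aut}}(V_i, f_{|V_i})$, I form $\alpha := \alpha_i \oplus {\rm{id}}_{V_{3-i}} \in {\rm{Aut}}(V,f)$; the characteristic property gives $\alpha(X) = X$, so restricted to any $x \in X \cap V_i$ it yields $\alpha_i(x) \in X \cap V_i$. Part (ii) is entirely analogous: I replace ${\rm{id}}_{V_{3-i}}$ by $0$ and use $\phi_i \oplus 0 \in {\rm{End}}(V,f)$ together with $\phi(X) \subseteq X$.

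The forward direction of (iii) is almost immediate: since $X$ is hyperinvariant and $\pi_i \in {\rm{End}}(V,f)$, we have $\pi_i(X) \subseteq X \cap V_i$, whence $X = (X \cap V_1) \oplus (X \cap V_2)$; hyperinvariance of each $X \cap V_i$ in $V_i$ then comes from (ii).

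The main obstacle is the converse of (iii): I must show that an arbitrary $\phi \in {\rm{End}}(V,f)$ preserves $X$, while the hypotheses only control the diagonal and within-summand behaviour. My strategy is to decompose $\phi|_{V_i} = \phi_{i1} + \phi_{i2}$ with $\phi_{ij} := \pi_j \circ \phi|_{V_i}: V_i \to V_j$, each of which intertwines $f_{|V_i}$ and $f_{|V_j}$. The diagonal pieces $\phi_{ii} \in {\rm{End}}(V_i, f_{|V_i})$ send $X \cap V_i$ into itself by the hyperinvariance hypothesis on the summand intersections. For the off-diagonal pieces $\phi_{ij}$ with $i \ne j$, I extend $\phi_{ij}$ to $\widetilde\phi_{ij}: V \to V$ by declaring it to vanish on $V_j$; then $\widetilde\phi_{ij}^2 = 0$, so $\alpha := {\rm{id}}_V + \widetilde\phi_{ij}$ is an automorphism (with inverse ${\rm{id}}_V - \widetilde\phi_{ij}$) that commutes with $f$. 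The characteristic hypothesis then yields $\widetilde\phi_{ij}(X) = (\alpha - {\rm{id}}_V)(X) \subseteq X$, so $\phi_{ij}(x_i) \in X \cap V_j$ for every $x_i \in X \cap V_i$. Writing $x = x_1 + x_2 \in X$ via the direct-sum hypothesis and summing the four contributions yields $\phi(x) \in X$, completing the proof.
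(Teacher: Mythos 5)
Your proof is correct, and parts (i), (ii), and the forward direction of (iii) follow essentially the same lines as the paper (extend $\alpha_i$ or $\phi_i$ by the identity or zero on the complementary summand; use the projections $\pi_i\in\End(V,f)$). The converse of (iii), however, is where you genuinely diverge. The paper refines both $V_1$ and $V_2$ into cyclic summands $\langle u_j\rangle$ of a single generator tuple, uses part (ii) inside each $V_i$ to write $X\cap V_i=\oplus_j\bigl(X\cap V_i\cap\langle u_j\rangle\bigr)$, concludes $X=\oplus_j\bigl(X\cap\langle u_j\rangle\bigr)$, and then invokes the criterion of Theorem~\ref{thm.wogn}(ii) (cited from [AW5]) to get hyperinvariance. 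You instead argue directly: you block-decompose an arbitrary $\phi\in\End(V,f)$ as $\phi_{ij}=\pi_j\circ\phi|_{V_i}$, dispose of the diagonal blocks by the hypothesis that each $X\cap V_i$ is hyperinvariant in $V_i$, and handle the off-diagonal blocks by observing that $\widetilde\phi_{ij}$ is square-zero and commutes with $f$, so $\iota+\widetilde\phi_{ij}\in\Aut(V,f)$ and the characteristic hypothesis forces $\widetilde\phi_{ij}(X)\subseteq X$. This unipotent-perturbation trick is sound (the inverse $\iota-\widetilde\phi_{ij}$ works in any characteristic), and it buys you a self-contained proof that does not pass through generator tuples or the external criterion $X=\oplus_i\bigl(X\cap\langle u_i\rangle\bigr)$; the paper's route, by contrast, reuses machinery it needs anyway and makes the connection to the lattice description of ${\rm Hinv}(V,f)$ explicit. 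The one thing to make sure you state is that the direct-sum hypothesis \eqref{eq.smun} is what lets you write each $x\in X$ as $x_1+x_2$ with $x_i\in X\cap V_i$ before summing the four block contributions — you do use it, so the argument is complete.
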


\begin{proof}
(i)
 Let $x_1 \in X \cap V_1 $ and    
$ g_1 \in  {\rm{Aut}} (V_1, f_{|V_1} )$.
To show that $ g_1(x_1) \in  X $
we extend $ g_1 $ to  an automorphism
 $ g \in {\rm{Aut}} (V, f)$ as follows.
Let $ \iota_2 $ be the identity map of $V_2$. 
Then  $ g =   g_1 +  \iota_2
 \in {\rm{Aut}}(V, f)$.
Therefore   $X \in {\rm{Chinv}}(V, f)$ implies
  $ g ( x_1 ) \in X $.
On the other hand
$  g ( x_1 ) =   g_1(x_1) \in  V_1 $.
Hence  $  g_1(x_1) \in X \cap V_1 $. 
\\
(ii) Let $ x_1 \in X \cap V_1$ and 
  $ h_1 \in {\rm{End}}(V_1, f_{ |V_1})$.
If   $ 0_2 $ is the zero
map on $V_2$ then    $h =  h_1 + 0_2  \in  {\rm{End}}(V, f) $.
An argument as in part (i) shows that \mbox{$  h_1(x_1) \in X \cap V_1$}. 
\\
(iii)  
Let $ V_i = \oplus _{\nu = 1} ^{m_i}  \langle u^{(i)}_{\nu} \rangle $, $ i = 1, 2$, 
be  decomposed into 
cyclic subspaces,  
and let 
$ U = ( u _j )_{j=1} ^m  \in \mathcal{U} $ contain the  vectors 
$  u^{(1)}_{\nu}$, $\nu = 1, \dots, m_1$, and $   u^{(2)}_{\nu}, 
\nu = 1, \dots , m_2 $.
Define $ X_i = X \cap V_i   $, $i = 1,2 $.
Suppose  $X$ is characteristic and  \eqref{eq.smun} holds.
Then \mbox{Lemma~\ref{la.hncho}(ii)}  and $V_1 \cap V_2 = 0 $ 
imply 
\[
 X_i    =   \oplus _{\nu = 1} ^{m_i}
(  X _i  \cap  \langle u^{(i)}_{\nu} \rangle ) = 
 \oplus _{j = 1} ^{m}
(  X _i  \cap  \langle u_{j} \rangle ) , \: i = 1,2. 
\]
Hence 
\begin{multline*}
X = X _1 \oplus  X_2 
=  
 \oplus _{j = 1} ^{m}\Big(   ( X _1   \cap  \langle u_{j}   \rangle )
+ ( X _2   \cap  \langle u_{j} \rangle  ) 
\Big) \\
\subseteq \oplus
_{j = 1} ^m [ (  X _1 +X _2 ) 
  \cap  \langle u_j \rangle ] 
= 
  \oplus
_{j = 1} ^m ( X  \cap  \langle u_j \rangle  )
 \subseteq X. 
\end{multline*}
Then $X $ satisfies 
 \eqref{eq.brhy},  and therefore $X$ is hyperinvariant. 
Using \eqref{eq.brhy} it is not difficult to see that  $X \in  {\rm{Hinv}} (V, f)  $
 implies  \eqref{eq.smun}.  
\end{proof}

We apply the preceding lemma to the decomposition 
$V = E \oplus G $. 
Let  $ \pi _E $   be the projection of $ V $ on $E$ along $G$, 
 and let 
$ \pi_G $  denote  the complementary projection such 
that $ \pi_E  +  \pi_G = \iota $.
Then  
 $\pi_E  f = f  \pi_E  $ and $\pi_G  f = f  \pi_G$. 
 Let $  (X \cap E)_{H|E}  $ denote the largest hyperinvariant
subspace (with respect to $ f_{|E} $) contained in
$E$. 

\begin{lemma} \label{la.brba} 
Let $ E $ and $G$ be subspaces of $V$ such that 
\eqref{eq.hnngt} holds.
Suppose   $X $ is  a characteristic subspace of $V$.  
\begin{itemize}
\item[\rm{(i)}] 
Then 
\,$\pi_E X  = X \cap E$\, 
and $   \pi_G X  = X \cap G$, 
and 
\beq \label{eq.dcmpo}
X = ( X \cap E )  \oplus  ( X  \cap G ) . 
\eeq
Moreover, 
\beq \label{eq.egch} 
  X \cap E  \in  {\rm{Chinv}} (E ,f_{|E})  \quad
and \quad  
X  \cap G \in 
  {\rm{Hinv}} (G ,f_{|G}) ,
\eeq 
and 
\beq \label{eq.lahpi} 
X _H =  (X \cap E)_{H|E} 
\oplus  (X \cap G  ) 
\eeq 
\item[\rm{(ii)}] 
The subspace $ X $ is hyperinvariant  in $V$ if and only if 
$   X  \cap E    $ is hyper\-invariant  in $E$. 
\end{itemize} 
\end{lemma}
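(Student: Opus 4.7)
The plan is to exploit the generator tuple adapted to $E$ and $G$ provided by \eqref{eq.asgnt}. First I would fix $U = (u_1, \dots, u_m) \in \mathcal{U}$ satisfying \eqref{eq.asgnt} and set $I_E = \{i : d(t_i) = 1\}$, $I_G = \{i : d(t_i) > 1\}$, so $E = \oplus_{i \in I_E} \langle u_i \rangle$ and $G = \oplus_{i \in I_G} \langle u_i \rangle$. Let $\pi_i$ denote the projection of $V$ onto $\langle u_i \rangle$ along the complementary summand $\oplus_{j \ne i} \langle u_j \rangle$; then $\pi_E = \sum_{i \in I_E} \pi_i$ and $\pi_G = \sum_{i \in I_G} \pi_i$.

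The core observation is that by Theorem~\ref{thm.wogn}(i), every $i \in I_G$ satisfies $\pi_i X = X \cap \langle u_i \rangle \subseteq X$. Summing over $i \in I_G$ gives $\pi_G X \subseteq X \cap G$, and the reverse inclusion is trivial (for $x \in X \cap G$ one has $\pi_G x = x$), so $\pi_G X = X \cap G$. Since $\pi_E = \iota - \pi_G$, this forces $\pi_E X = X \cap E$ as well, which immediately yields the decomposition \eqref{eq.dcmpo}. The two characteristic statements in \eqref{eq.egch} then follow from Lemma~\ref{la.hncho}(i) applied to $V = E \oplus G$. To upgrade the $G$-part to hyperinvariance in $G$, I would combine the identity $X \cap G = \oplus_{i \in I_G} (X \cap \langle u_i \rangle)$ (which is already at hand from the previous step) with criterion \eqref{eq.brhy} of Theorem~\ref{thm.wogn}(ii) applied inside $G$ with the subtuple $(u_i)_{i \in I_G}$.

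For the formula \eqref{eq.lahpi}, I would split the identity $X_H = \oplus_{i=1}^m (X \cap \langle u_i \rangle)$ of Theorem~\ref{thm.wogn}(iii) into its $I_E$-part and $I_G$-part. The $I_G$-part is $X \cap G$ by the step above. The $I_E$-part equals $(X \cap E)_{H|E}$ by applying \eqref{eq.larg} inside $E$ with the subtuple $(u_i)_{i \in I_E}$, noting that $X \cap E$ is characteristic in $E$, so the formula \eqref{eq.lahpi} drops out.

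Part (ii) is then a short consequence of part (i). The forward implication is Lemma~\ref{la.hncho}(ii). For the converse, if $X \cap E$ is hyperinvariant in $E$ then $(X \cap E)_{H|E} = X \cap E$, so \eqref{eq.lahpi} combined with \eqref{eq.dcmpo} gives $X_H = X$, meaning $X$ is hyperinvariant in $V$. I do not expect a real obstacle; the only nontrivial input is Theorem~\ref{thm.wogn}(i), which is precisely the bridge that makes $\pi_G$ preserve $X$ and thereby makes the $E$/$G$-splitting of $X$ work — without the hypothesis that the multiplicities on $G$ are all strictly greater than one, there would be no reason for such a projection identity to hold.
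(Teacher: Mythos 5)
Your proof is correct and follows essentially the same route as the paper: the key step in both is Theorem~\ref{thm.wogn}(i) applied to a generator tuple adapted as in \eqref{eq.asgnt}, giving $\pi_G X = X\cap E$'s companion identity $\pi_G X = X\cap G$ and hence the splitting \eqref{eq.dcmpo}, with \eqref{eq.lahpi} read off from \eqref{eq.larg} and part (ii) from Lemma~\ref{la.hncho}. The only (harmless) divergence is that for the hyperinvariance of $X\cap G$ in $G$ the paper simply invokes Shoda's theorem (Theorem~\ref{thm.vnpsa}), since $f_{|G}$ has no unrepeated elementary divisors, whereas you rederive it from the decomposition $X\cap G=\oplus_{i\in I_G}\bigl(X\cap\langle u_i\rangle\bigr)$ and criterion \eqref{eq.brhy}; both are valid.
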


\begin{proof} 
(i) 
Let $ U \in \mathcal{U} $ 
satisfy \eqref{eq.asgnt}.  
If  $x \in X $ then Theorem~\ref{thm.wogn}(i) yields 
 \[
\pi _G x  = \left(   \sum \nolimits 
_{   d(t_i) > 1 }   \pi _i   \right) x     = \sum   \nolimits 
_{  d(t_i) > 1 }   \pi _i   x  \in X .
\]
Hence $\pi _G X \subseteq X \cap G $, and therefore 
$ \pi _G X =  X \cap G $. 
Then $ x = \pi _G x +\pi _E x $  implies  $ \pi _E x   \in X $.
Thus we 
obtain  $ \pi _E X  = X \cap E $, and 
$ X = \pi _E X \oplus   \pi _G X $.

From Lemma~\ref{la.hncho}(i) 
we conclude that  $  X \cap E   $
 and $  X \cap  G  $ 
are   characteristic in $E$, respectively in  $G$.
According to  \eqref{eq.hnngt} 
the map $ f_{|G} $ has only repeated 
elementary divisors.  Hence Theorem~\ref{thm.vnpsa} 
implies that  
 $   X \cap G   $ is hyperinvariant in $G$. 
The description of $X_H$ in  \eqref{eq.lahpi} follows from \eqref{eq.larg}.

(ii)
The subspace   $  X \cap  G    $ is  hyperinvariant in $G$. 
We  apply Lemma~\ref{la.hncho}(iii).

\end{proof}

The assumption that $X$ is   characteristic is 
essential for 
\eqref{eq.dcmpo}.

\begin{example}
{\rm{
Let $V= \langle u_1, u_2, u_3 \rangle$
with $ \e(u_1) = 1, \:  \e(u_2) = \e(u_3) = 2$.
Then $ E = \langle  u_1 \rangle  $ and $G =  \langle u_2, u_3 \rangle$.
Set $X = \langle  u_1 + f u_2 \rangle $.  Then 
$ X \cap G =
0 $ and   $ X \supsetneqq ( X \cap E ) \oplus ( X \cap G )$. 
}}
\end{example}

\medskip 

In the following  we deal with invariant subspaces associated 
 to  subsets of unrepeated elementary divisors of $f$. 
Let $T$ be an  invariant subspace of $V$
such that  $ f_{|T}  $ has only 
unrepeated elementary divisors 
and such
that 
\[
V =  T \oplus V_2 \quad \text{for some} \quad    V_2  \in \Inv(V,f) ,
\]
and  $  f_{|T}  $  
and $ f_{|V_2}$   have no elementary  divisors  in common.
The subspace $T$ can also be characterized as follows.
There exists  a $   T_2  \in \Inv(V,f) $ such that
\beq \label{eq.tchrs}
 T \oplus T_2 = E  , \:\: \text{and}   \:\:   
 T_2 \oplus G = V_2 ,  \:\;
 E \oplus G = V  \:\: \text{and}  \:\:  
E   \:\: \text{and}  \:\:  G \:\: \text{satisfy}  \:\:    \eqref{eq.hnngt}. 
\eeq
 Let $ \pi _T $ be the projection 
on $T$ along $V_2$ and $ \pi _{V_2} $ be the complementary
projection. 
If $ Y \subseteq T $ then 
$Y ^ {c_T} $  denotes the characteristic hull of $Y $ with respect to
$T$,
\[
Y ^ {c_T} =  \big\langle \alpha _T y ; \: y \in Y , \; \alpha _T \in 
{\rm{Aut}}( T,  f|_T ) \big\rangle  . 
\]
A spin-off  from the following lemma is a proof of
Theorem~\ref{thm.pfle}(i).

\begin{lemma} \label{la.umbn}
Let $T$ be an invariant 
 subspace such that $f_{|T} $ has only unrepeated elementary 
divisors and \eqref{eq.tchrs}  holds. 
Suppose   $X $ is  a characteristic subspace of $V$.  
\begin{itemize}
\item[\rm{(i)}] 
Then $  X \cap T  \in  {\rm{Chinv}} (T,f_{|T})$. 
If  the subspace $ X $ is hyperinvariant  in $V$
then  \,\,$   X  \cap T    $\,  is hyper\-invariant  in $T$.
\item[\rm{(ii)}]  
We have 
\beq \label{eq.cplds} 
 {\rm{Aut}}(  T, f_{|T} )    = \{ \pi_T \alpha  \pi_T; \, \alpha \in
{\rm{Aut}}( V, f) \}. 
\eeq  
 \item[\rm{(iii)}] 
If $ Y \subseteq T $  
then
\beq \label{eq.nmgz} 
Y^c  \cap T = \pi_T Y^c  = Y ^ {c_T} .
\eeq
\end{itemize}
\end{lemma}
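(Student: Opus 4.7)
Part (i) follows at once from Lemma~\ref{la.hncho}(i)--(ii) applied to $V = T \oplus V_2$, since both summands are $f$-invariant. The ``$\supseteq$'' inclusion in part (ii) is also routine: given $\alpha_T \in \Aut(T, f_{|T})$, define $\alpha(t+w) := \alpha_T t + w$ for $t \in T$, $w \in V_2$. The $f$-invariance of $T$ and $V_2$ yields $\alpha \in \Aut(V,f)$, and a direct computation gives $\pi_T\alpha\pi_T|_T = \alpha_T$ and $\pi_T\alpha\pi_T|_{V_2} = 0$.

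The heart of the lemma is the opposite inclusion in (ii). Given $\alpha \in \Aut(V,f)$, set $\beta := \pi_T \alpha|_T$; commutation with $f_{|T}$ is immediate because $T$ and $V_2$ being $f$-invariant forces $\pi_T f = f \pi_T$. To prove bijectivity, pick a generator tuple $(w_1, \ldots, w_k)$ of $T$ with $t_i := \e(w_i)$ strictly increasing in $i$, and extend it to a generator tuple $U$ of $V$ by adjoining generators of $V_2$. Since $T$ is a direct summand, each $w_i$ is also a generator of $V$, and by hypothesis each $\lambda^{t_i}$ is unrepeated in $V$. Applying Lemma~\ref{la.unrpes}(iii) to $\alpha w_i$ yields
\[
 \alpha w_i = c_i w_i + f v_i + g_i, \qquad c_i \ne 0,\; v_i \in \langle w_i\rangle,\; g_i \in \langle U \setminus \{w_i\}\rangle [f^{t_i}].
\]
Projection onto $T$ kills the $V_2$-components of $g_i$, so $\pi_T g_i \in \bigoplus_{j \ne i} \langle w_j\rangle[f^{t_i}]$. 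For $j > i$, the inequality $t_j > t_i$ gives $\langle w_j\rangle[f^{t_i}] = f^{t_j - t_i}\langle w_j\rangle \subseteq fT$. Hence, modulo $fT$, the induced endomorphism $\bar\beta$ of $T/fT$ is represented in the basis $(w_i + fT)$ by an upper triangular matrix with nonzero diagonal $c_1, \ldots, c_k$; in particular $\bar\beta$ is bijective, so $T = \beta T + fT$. Since $\beta$ commutes with the nilpotent map $f$, substituting this identity into itself iteratively yields $T = \beta T + f^m T$ for every $m$, and $f^m T = 0$ for $m$ large forces $T = \beta T$, so $\beta \in \Aut(T, f_{|T})$.

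Part (iii) is a corollary of (ii). For $\alpha \in \Aut(V,f)$ and $y \in Y \subseteq T$ we have $\pi_T \alpha y = (\pi_T \alpha|_T)\, y$; by (ii) the restriction $\pi_T \alpha|_T$ ranges over all of $\Aut(T, f_{|T})$ as $\alpha$ varies, giving $\pi_T Y^c = Y^{c_T}$. The identity $\pi_T Y^c = Y^c \cap T$ splits into the trivial direction ($w \in Y^c \cap T$ satisfies $w = \pi_T w \in \pi_T Y^c$) and the converse, where the ``$\supseteq$'' half of (ii) lifts each $\alpha_T \in \Aut(T, f_{|T})$ to some $\alpha \in \Aut(V,f)$ agreeing with $\alpha_T$ on $T$, so that $\alpha_T y = \alpha y \in Y^c$ for every $y \in Y$. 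The principal obstacle is the bijectivity of $\beta$ in part (ii); it rests decisively on the disjointness of the elementary divisors of $f_{|T}$ and $f_{|V_2}$ together with the unrepeated nature of those in $T$, as exactly these hypotheses force the ``upper'' components of $\pi_T g_i$ (those with $j > i$) into $fT$ and thereby produce the triangular structure of $\bar\beta$.
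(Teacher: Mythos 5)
Your proof is correct, and while its skeleton coincides with the paper's (part (i) from Lemma~\ref{la.hncho}, the easy inclusion of (ii) by extending $\alpha_T$ by the identity on $V_2$, part (iii) as a formal corollary of (ii)), the key step of (ii) is handled by a genuinely different argument. The paper applies Lemma~\ref{la.unrpes} to each $\alpha u_{\tau_i}$, uses Lemmas~\ref{la.mxmn} and~\ref{la.jtgn} to check that $(\pi_T\alpha)u_{\tau_i}$ is again a generator of $T$ of exponent $t_i$, and then asserts that the images form a generator tuple of $T$ --- a step that strictly speaking also requires the directness of the sum $\sum_i \langle (\pi_T\alpha)u_{\tau_i}\rangle$, which the paper does not verify explicitly. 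You instead pass to $T/fT$, where the same decomposition from Lemma~\ref{la.unrpes} (the components with larger exponent landing in $f^{t_j-t_i}\langle w_j\rangle\subseteq fT$, those with smaller exponent surviving) shows that the induced map is triangular with nonzero diagonal, hence invertible; the identity $T=\beta T+fT$ then bootstraps to $T=\beta T$ by nilpotence of $f$. This buys a cleaner and fully rigorous proof of bijectivity of $\pi_T\alpha|_T$ without needing to re-certify a generator tuple, at the cost of introducing the quotient $T/fT$; both routes depend in the same essential way on the elementary divisors of $f_{|T}$ being unrepeated in $V$ and pairwise distinct.
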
 

\begin{proof} 
(i)  Because of  $ V = T \oplus V_2 $  
one can apply  Lemma~\ref{la.hncho}.
(ii) 
Let $ U = \{u_1 , \dots , u_m \}  $
  be a generator tuple
of $V$ such that  a subtuple 
$ U_T = \{ u_{\tau_1} , \dots , u_{\tau_q} \} \subseteq U  $
is a generator tuple of $T$ (with respect to $f_{|T}$). 
Set  $I_T = \{\tau _1, \dots , \tau_q \}$.
If $ i \in  I_T  $
then Lemma~\ref{la.unrpes} implies 
\[ 
\alpha \, u_i =c_i  u_i + fv_i +  
\sum \nolimits _{j  \in  I_T, \,  j \ne i} w_j +
 \sum \nolimits _{k \notin  I_T  , \,  1 \le k \le m  } x_k ,
 \]
where 
$c_i \ne 0 $,
 $v_i \in \langle u_i \rangle$, 
$w_j  \in  \langle u_j \rangle[f^{t_i}]$
and $ x_k \in \langle u_k  \rangle [f^{t_i}]$.
Hence 
\[
( \pi_T \alpha )  u_i = c_i u_i + fv_i + \sum \nolimits _{j\in I_T, j \ne i}w_j  .
\]
Then Lemma~\ref{la.mxmn}    yields 
  $\e \!\big( ( \pi_T \alpha) u_i  \big) = t_i $, and 
\[
 \h( f^{t_i - 1 } ( \pi_T \alpha )  u_i )  =
\h(  f^{t_i - 1 }  u_i ) =  t_i - 1  , 
\]
and  
$ \h(  ( \pi_T \alpha )  u_i ) = 0$. 
By Lemma~\ref{la.jtgn}   the element  $(\pi_T \alpha) u_i  \in T  $
is a generator. 
Hence   $ \big(  (\pi_T \alpha )u_{\tau _1} , \dots , 
( \pi_T \alpha)u_{\tau _q}  \big) $
is a
generator tuple  of $T $.  Then 
the map  given by 
\[ 
\pi_T\alpha  
:  u_i \mapsto (  \pi_T \alpha ) u_i   , \; i \in I_T, 
\]
is in  $ {\rm{Aut}}( T,  f|_T )  $. 
Hence $  \pi_T \alpha   \pi_T \in {\rm{Aut}}( T,  f|_T ) $. 
Now consider an automorphism  
 $ \alpha _T  \in {\rm{Aut}}( T,  f|_T ) $.  
We extend    $ \alpha _T $ 
to an automorphism $ \tilde{\alpha} 
\in {\rm{Aut}}( V,  f )$
defining
\beq \label{eq.stxt}  
\tilde{\alpha}    : u_i \mapsto \alpha _T u_i \quad  \text{if}  \quad   i \in I_T , 
\quad  \text{and} \quad 
\tilde{\alpha}     : u_i \mapsto   u_i \quad  \text{if}  \quad   i \notin I_T.
\eeq 
Then $ (\pi _T \tilde{\alpha}  \pi _T ) u_{\tau_i }=   \alpha _T  u_{\tau_i } $, 
$ i = 1, \dots , q$, and  therefore $  \alpha _T = \pi _T \tilde{\alpha}  \pi _T $.

(iii) 
Let us show first that 
 $ \pi_T  Y^c =     Y^c \cap T  $. 
%
If $ \alpha \in {\rm{Aut}}( V,  f )$ then 
$\alpha _T = \pi _T \alpha \pi _T \in {\rm{Aut}}( T,  f_{|T } ) $.
Let  $\tilde{\alpha}  \in {\rm{Aut}}( V,  f ) $ be  the extension of  $\alpha _T$ 
given  by \eqref{eq.stxt}.
If  $ y \in Y  \subseteq T $
then 
$ \pi _T \alpha  y = \pi _T \alpha \pi _T y = \alpha _T y 
 = 
\tilde{\alpha} y \in Y^c $.
Hence  $  \pi_T  Y^c \subseteq  Y^c $, 
which suffices to prove  $ \pi_T  Y^c =     Y^c \cap T  $. 
Then 
\begin{multline*} 
Y ^c \cap T =   \pi _T Y^c   
\\
\langle ( \pi _T  \alpha ) y ; \, \alpha \in
{\rm{Aut}}( V,  f )  ,  \,  y \in Y \rangle =
\langle ( \pi _T  \alpha \pi _T ) y ; \, \alpha \in
{\rm{Aut}}( V,  f )  ,  \,  y \in Y \rangle  
=
\\
 \langle (   \alpha  _T ) y ; \, \alpha _T \in
{\rm{Aut}}( T,  f_{|T}  )  ,  \,  y \in Y \rangle   = Y ^{c_T} .
\end{multline*}  
\end{proof}

\noindent
{\bf{Proof of Theorem~\ref{thm.pfle}(i).}}
We apply Lemma~\ref{la.umbn}.
Let $ T = \langle u_\rho ,   u_\tau \rangle $. 
Set $ z_{s,q}  =  f^{R -s } u_\rho  +  f^{S -q } u_\tau $, 
and $ Y =  \langle z_{s,q}  \rangle   $.
Then $Y \subseteq T $, and 
$  Y^{c_T} = Y ^c \cap T $.  It follows from  Proposition~\ref{la.rgwpo}
that  $ Y^{c_T} $ is
not hyperinvariant  in $T$. Hence 
$Y^c =  \langle z_{s,q}  \rangle  ^c $ is not  hyperinvariant  in $V$.
\hfill $\square$ 

Suppose $Y \subseteq E $ is characteristic and 
non-hyperinvariant with respect to $f_{|E} $
 How can one extend 
$Y$  to a  subspace $ X $ that has these properties 
 in  the entire space $V$?

\begin{theorem}     \label{thm.crgna} 
Let $ E $ and $G$ be subspaces of $V$ such that 
\eqref{eq.hnngt} holds.
Let 
$Y  \in  {\rm{Chinv}} (E, f_{|E})  $.
If  $Y_s $ is  a subspace of $Y$ and
 $ W $ is 
 a subspace of  $G $ such that 
$  Y_s + W $ is characteristic  in $V$ 
then 
\beq \label{eq.cdroa}
  (  Y + W  ) ^c   \cap E = Y  .
\eeq 
The subspace $   (  Y + W  ) ^c $
is hyperinvariant in $V$  only if $ Y $ is hyperinvariant  in~$E$.  
\end{theorem}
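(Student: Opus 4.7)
The strategy is to exploit the decomposition $V = E \oplus G$ via Lemma~\ref{la.brba} together with Lemma~\ref{la.umbn}(iii), which identifies, after projection onto $E$, the characteristic hull in $V$ of a subset of $E$ with the characteristic hull computed inside $E$. The additional hypothesis that $Y_s + W$ is characteristic in $V$ will enter only in order to bound the $E$-component of $W^c$.

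First I would observe that the characteristic hull distributes over sums of subspaces: since $Y$ and $W$ both contain $0$, every element of $Y + W$ has the form $y + w$, and $\alpha(y + w) = \alpha y + \alpha w$ for every $\alpha \in \Aut(V,f)$, so $(Y + W)^c = Y^c + W^c$. Since $E$ plays the role of $T$ in Lemma~\ref{la.umbn} (take $V_2 = G$ and $T_2 = 0$ in \eqref{eq.tchrs}), part~(iii) of that lemma gives $Y^c \cap E = \pi_E Y^c = Y^{c_E}$, and because $Y$ is characteristic in $E$ the right-hand side equals $Y$.

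The heart of the argument is the inclusion $\pi_E W^c \subseteq Y$. Fix $w \in W$ and $\alpha \in \Aut(V,f)$. Because $w = 0 + w \in Y_s + W$ and $Y_s + W$ is characteristic in $V$, we have $\alpha w \in Y_s + W$. Lemma~\ref{la.brba}(i) applied to the characteristic subspace $Y_s + W$ yields $(Y_s + W) \cap E = \pi_E(Y_s + W) = Y_s$, hence $\pi_E(\alpha w) \in Y_s \subseteq Y$. Taking the span over $\alpha$ and $w$ produces $\pi_E W^c \subseteq Y$.

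With these ingredients the conclusion follows quickly. The subspace $(Y+W)^c$ is characteristic in $V$, so Lemma~\ref{la.brba}(i) supplies $(Y+W)^c \cap E = \pi_E(Y+W)^c = \pi_E Y^c + \pi_E W^c \subseteq Y + Y = Y$, while the reverse inclusion is immediate from $Y \subseteq Y + W \subseteq (Y+W)^c$ and $Y \subseteq E$. This establishes \eqref{eq.cdroa}. For the hyperinvariance statement, if $(Y+W)^c \in {\rm{Hinv}}(V,f)$ then Lemma~\ref{la.hncho}(ii) applied to $V = E \oplus G$ forces $(Y+W)^c \cap E = Y$ to lie in ${\rm{Hinv}}(E, f_{|E})$. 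The only non-routine step is the bound $\pi_E W^c \subseteq Y$: without the characteristicity of $Y_s + W$ an $f$-commuting automorphism of $V$ can carry a vector of $G$ into one with an uncontrolled $E$-component, and the role of $Y_s$ is precisely to supply the required ceiling.
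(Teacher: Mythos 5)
Your proof is correct and follows essentially the same route as the paper's: the splitting $(Y+W)^c = Y^c + W^c$, the identification $\pi_E Y^c = Y^{c_E} = Y$ via Lemma~\ref{la.umbn}(iii) with $T=E$, and the use of the characteristicity of $Y_s+W$ to force $\pi_E W^c \subseteq Y_s \subseteq Y$. The only cosmetic differences are that the paper gets $\pi_E W^c\subseteq Y_s$ from the single inclusion $W^c\subseteq (Y_s+W)^c=Y_s+W$ rather than element by element, and cites Lemma~\ref{la.brba}(ii) instead of Lemma~\ref{la.hncho}(ii) for the final hyperinvariance claim.
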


\begin{proof} 
Set  $ X = (Y +  W  ) ^c $. We are  going to show that 
$ \pi _E X  = Y $. 
 Because of 
$ Y \subseteq E $ and  $ Y  \subseteq X $  we see  that
\beq \label{eq.mdsn}  
 Y  \subseteq ( X \cap E )\subseteq \pi _E X  .
\eeq 
  It is obvious that $  (Y + W)^c  \subseteq Y^c + W^c $.
From  $Y, W \subseteq (Y+W)^c$ follows 
 $ Y^c + W^c \subseteq (Y + W)^c$.  Hence 
$ X = Y^c + W^c $.  Therefore 
\beq \label{eq.smtp} 
\pi _E X =  \pi _E  Y^c +  \pi _E  W^c . 
\eeq 
Since $Y_s + W $ is characteristic we have 
$W^c \subseteq ( Y_s + W)  ^c  =  Y_s + W $.
 Hence  $Y_s \subseteq E $ and 
$ W \subseteq G $, or equivalently 
$  \pi _E W  = 0 $, imply  
$\pi _E W^c   \subseteq  Y_s   \subseteq Y $. 
Since  $Y$ is characteristic in $E$ it follows from 
Lemma~\ref{la.umbn}  that 
$ Y ^{c_E} =  Y = \pi_E Y^c $.  Hence \eqref{eq.smtp}  implies 
$  \pi _E X \subseteq  Y  $, and \eqref{eq.mdsn} yields 
$ Y =   X  \cap E  =  \pi_E X    $. 
Thus we  proved \eqref{eq.cdroa}.
If $ Y $ is not hyperinvariant in $E$ then  it follows from 
Lemma~\ref{la.brba}  that $X $ is not hyperinvariant in $V$.  
\end{proof}

\section{The main theorem}  \label{sct.mn} 
 
In this section we drop the assumption that $V$ is 
generated by two vectors.  Hence 
the map $f$  can have more than two elementary divisors.
We assume that only two of them are unrepeated.
In that case we obtain a description of the family of
characteristic non-hyperinvariant subspaces 
which extends Theorem~\ref{thm.pfle}(ii).

We first consider  the case where $f$ has no unrepeated  elementary
divisors. 
In terms of a decomposition $ V = E \oplus G $ in \eqref{eq.hnngt}   
that assumption 
is equivalent to $V = G $. 

\begin{lemma} \label{la.elnsn} 
Let 
$ U = (u_1, \dots, u_m ) \in \mathcal{U} $, $ \e(u_i) = t_i $, $i = 1,
\dots , m$. 
Suppose  $f$ has no unrepeated elementary divisors, that is 
\beq \label{eq.smnt} 
d(t_i) > 1 \quad  for \: \: all  \quad i=1, \dots , m.
\eeq 
Then  $X \subseteq V $   is hyperinvariant if and only if 
\beq \label{eq.sgnrt} 
X =   \langle  f^{r_1 }  u_1 + \cdots +  f^{r_m } u_m \rangle ^c 
\eeq 
for some $ \vec r = ( r_1 , \dots , r_m) $ satisfying 
\eqref{eq.zwrug}.
\end{lemma}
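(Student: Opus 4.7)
The plan is to identify $\langle z\rangle^c$, with $z = f^{r_1}u_1 + \cdots + f^{r_m}u_m$, as the hyperinvariant subspace $W(\vec r\,)$ of Theorem~\ref{thm.knwr}. Once the identity $\langle z\rangle^c = W(\vec r\,)$ is established, both directions of the lemma are immediate: if $X$ is hyperinvariant, Theorem~\ref{thm.knwr}(i) gives $X = W(\vec r\,)$ for some $\vec r \in \mathcal{L}(\vec t\,)$, and this equals $\langle z\rangle^c$; conversely, $\langle z\rangle^c = W(\vec r\,)$ is hyperinvariant.

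One inclusion is essentially free. By Theorem~\ref{thm.knwr}(ii) the vector $z$ lies in $W(\vec r\,) = f^{r_1}\langle u_1\rangle \oplus \cdots \oplus f^{r_m}\langle u_m\rangle$, and since $W(\vec r\,)$ is hyperinvariant it is both characteristic and $f$-invariant. Therefore $\langle \alpha z\rangle \subseteq W(\vec r\,)$ for every $\alpha \in \mathrm{Aut}(V,f)$, which yields $\langle z\rangle^c \subseteq W(\vec r\,)$.

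For the reverse inclusion the hypothesis \eqref{eq.smnt} enters decisively. Fix $j$; by \eqref{eq.smnt} there is an index $i \ne j$ with $t_i = t_j$, and the chain conditions \eqref{eq.zwrug} then force $r_i = r_j$. Define $\beta : V \to V$ on generators by $\beta u_i = u_i + u_j$ and $\beta u_k = u_k$ for $k \ne i$, and extend $f$-linearly. Because $f^{t_i}u_j = f^{t_j}u_j = 0$, the map $\beta$ is a well-defined endomorphism commuting with $f$; since $(\beta - \iota)^2 = 0$ on the generator tuple it is invertible, so $\beta \in \mathrm{Aut}(V,f)$. A direct computation gives
\[
\beta z \,=\, \sum_{k \ne i} f^{r_k}u_k + f^{r_i}(u_i + u_j) \,=\, z + f^{r_i}u_j \,=\, z + f^{r_j}u_j,
\]
so $f^{r_j}u_j = \beta z - z \in \langle z\rangle^c$. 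Since $\langle z\rangle^c$ is $f$-invariant, this gives $f^{r_j}\langle u_j\rangle \subseteq \langle z\rangle^c$; carrying this out for every $j$ and invoking Theorem~\ref{thm.knwr}(ii) yields $W(\vec r\,) = \bigoplus_j f^{r_j}\langle u_j\rangle \subseteq \langle z\rangle^c$.

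The main obstacle is verifying that the proposed $\beta$ really belongs to $\mathrm{Aut}(V,f)$, which relies on being able to pair each index $j$ with a companion $i \ne j$ of the same exponent, which is exactly what \eqref{eq.smnt} guarantees. Without a repeated partner there is no such $\beta$ available to extract the individual summand $f^{r_j}u_j$ from $z$, and the argument for the reverse inclusion collapses.
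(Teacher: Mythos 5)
Your proof is correct, and its skeleton --- identifying $\langle z\rangle^c$ with $W(\vec r\,)$, with the inclusion $\langle z\rangle^c \subseteq W(\vec r\,)$ coming for free from the hyperinvariance of $W(\vec r\,)$ --- is the same as the paper's. Where you genuinely diverge is the reverse inclusion $W(\vec r\,)\subseteq\langle z\rangle^c$. The paper gets it in two lines from machinery already quoted: by Shoda's Theorem~\ref{thm.vnpsa} (together with the fact that characteristic implies hyperinvariant when $|K|>2$), hypothesis \eqref{eq.smnt} forces the characteristic subspace $\langle z\rangle^c$ to be hyperinvariant, and then Theorem~\ref{thm.wogn}(ii) immediately gives $\pi_j z=f^{r_j}u_j\in\langle z\rangle^c$ for every $j$. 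You instead extract $f^{r_j}u_j$ by hand: pair $u_j$ with a partner $u_i$, $i\ne j$, of equal exponent (such a partner exists by \eqref{eq.smnt}, and \eqref{eq.zwrug} then forces $r_i=r_j$), and use the unipotent automorphism $\beta:u_i\mapsto u_i+u_j$ to obtain $f^{r_j}u_j=\beta z-z\in\langle z\rangle^c$. Your verification that $\beta\in{\rm{Aut}}(V,f)$ is sound: $f^{t_i}(u_i+u_j)=0$ precisely because $t_i=t_j$, and $\beta-\iota$ is square-zero, so $\beta$ is invertible. The trade-off is that your argument is constructive and self-contained --- it bypasses both Shoda's theorem and the projection criterion of Theorem~\ref{thm.wogn}, and works verbatim over any field --- whereas the paper's version is shorter given the results it has already assembled. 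In effect you re-derive, for this particular vector, the content of Theorem~\ref{thm.wogn}(i) (that $d(t_j)>1$ implies $\pi_j X=X\cap\langle u_j\rangle$) by exhibiting exactly the kind of automorphism on which that result rests.
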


\begin{proof} 
By  Shoda's theorem 
the assumption 
 \eqref{eq.smnt}  implies that each characteristic
subspace of $V$ is hyperinvariant. 
Hence, if $X$ is  of the form \eqref{eq.sgnrt} then 
$ X \in  {\rm{Hinv}} (V,f)$.
Now suppose  $X  $ is  hyperinvariant.
According to Theorem~\ref{thm.knwr} 
we have 
\beq \label{eq.xwrl} 
X =  W(\vec r \, ) =
   \langle f ^{r_1}  u_1 \rangle  \,  \oplus  \,  \, 
 \cdots  \,  \,
\oplus 
  \langle  f ^{r_m} u_m \rangle  
\eeq
for some $ \vec r $ satisfying \eqref{eq.zwrug}.
Define 
$w = f^{r_1 }  u_1 + \cdots +  f^{r_m } u_m $.
Then $w \in W(\vec r \, ) $. Since $W(\vec r \, )$ is hyperinvariant, it is obvious 
that  $\langle w \rangle^c \subseteq W(\vec r\, )$.
To prove  the converse inclusion 
we apply  Theorem~\ref{thm.wogn}(ii) to the hyperinvariant subspace 
$\langle w \rangle ^c $. We obtain 
$\pi _i w =  f ^{r_i}   u_i  \in \langle w \rangle^c $, 
$ i = 1, \dots , m$,  
and therefore  $ W(\vec r \, )  \subseteq \langle w \rangle^c $.
Hence $ X =  W(\vec r \, )  =  \langle w \rangle^c $. 
\end{proof} 

The notation in Theorem~\ref{thm.fnla}
below will be   the following.
We write 
$ \vec{\mu} ' = \vec{\mu} + (\vec e_\rho + \vec e_\tau)  $
if 
\begin{multline} \label{eq.mump}
(\mu ' _1, \dots , \mu ' _\rho , \dots , \mu ' _\tau , \dots , \mu ' _m )
= \\ 
(\mu_1, \dots , \mu_\rho , \dots , \mu _\tau , \dots , \mu_m )
+ ( 0 , \dots , 1  , \dots , 1  , \dots , 0 ),
\end{multline}  
that is, 
\beq \label{eq.nwanar} 
\mu '_i = \mu _i \quad  \text{if}  \quad  i \ne \rho , \tau,  \quad  \text{and}
\quad \mu' _{\rho } = \mu _{\rho} + 1 ,  \; 
\mu'_{\tau} = \mu _\tau + 1.  
\eeq 
Then  $ \vec{\mu} \,' \in  \mathcal{L}(\vec  t \, \, ) $ 
means
\beq \label{eq.agfrs} 
  \mu '_i \le t_i ,  \:  i = 1, \dots , m , 
 \:  \: \text{and} \: \:  \:
\mu '_1 \le \cdots \le \mu' _m ,  \:  \: \: 
 t_1 - \mu '_1 \le \cdots \le t_m - \mu' _m . 
\eeq 

\begin{theorem} \label{thm.fnla}
 Let $|K| = 2 $. 
Suppose  that among the  elementary divisors of 
 $f$ 
there are  exactly two unrepeated ones, namely 
$\lambda ^R $ and $\lambda ^S $. 
 Let
 $ U = (u_1 , \dots , u_m ) \in \mathcal{U} $ and 
$ \e(u_\rho ) =
R $, $ \e(u_\tau ) 
=  S $.
A subspace  $X \subseteq V $ is  characteristic and not hyperinvariant
if and only if 
\beq \label{eq.mngr}
X = \langle f^{\mu _1 } u_1 + \cdots + f ^{\mu _m} u_m  \rangle ^c ,
\eeq
such that the entries  $ \mu_{\rho} $ and $ \mu_{\tau} $  of 
$ \mu = (\mu_1 , \dots , \mu_m)$ 
satisfy 
\beq \label{eq.vnszq} 
0 \le \mu _\rho < \mu _\tau \quad and \quad
 0 <  R - \mu_ \rho < S - \mu _\tau , 
\eeq 
and such that 
$ \vec{\mu} + (\vec e_\rho + \vec e_\tau) \in  \mathcal{L}(\vec  t \, \, ) $.
\end{theorem}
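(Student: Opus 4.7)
The plan is to reduce the problem to two cases already settled, via the separation decomposition of Section~\ref{sct.eg}. Let $E = \langle u_\rho \rangle \oplus \langle u_\tau \rangle$ carry the two unrepeated elementary divisors $\lambda^R$ and $\lambda^S$, and let $G = \langle u_i : i \ne \rho, \tau \rangle$ carry all the repeated ones; then $V = E \oplus G$ is of the type~\eqref{eq.hnngt}, Lemma~\ref{la.brba} applies to this decomposition, and Lemma~\ref{la.elnsn} governs $G$ since $f_{|G}$ has no unrepeated elementary divisor.

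For necessity, I would start from a characteristic non-hyperinvariant $X$ and apply Lemma~\ref{la.brba} to obtain $X = (X \cap E) \oplus (X \cap G)$, with $X \cap E$ characteristic non-hyperinvariant in $E$ and $X \cap G$ hyperinvariant in $G$. Proposition~\ref{la.allen} applied to $X \cap E$ produces integers $s,q$ satisfying~\eqref{eq.vrschv} with
\[ X \cap E = \langle f^{R-s}u_\rho + f^{S-q}u_\tau \rangle^{c_E}, \qquad (X \cap E)_{H|E} = \langle f^{R-s+1} u_\rho \rangle \oplus \langle f^{S-q+1}u_\tau \rangle, \]
and Lemma~\ref{la.elnsn} applied to $X \cap G$ furnishes a tuple $(\mu_i)_{i \ne \rho,\tau}$ with $X \cap G = \oplus_{i \ne \rho,\tau} \langle f^{\mu_i}u_i \rangle$. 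Setting $\mu_\rho = R-s$, $\mu_\tau = S-q$, the inequalities~\eqref{eq.vnszq} follow from~\eqref{eq.vrschv}. The identification $X_H = (X \cap E)_{H|E} \oplus (X \cap G) = W(\vec\mu + \vec e_\rho + \vec e_\tau)$, using Lemma~\ref{la.brba}(iii) and Theorem~\ref{thm.knwr}, yields $\vec\mu + \vec e_\rho + \vec e_\tau \in \mathcal{L}(\vec t)$. Finally, I would verify $X = \langle z \rangle^c$ with $z = \sum f^{\mu_i}u_i$ by inclusion in each direction, applying $f$-commuting automorphisms that are the identity on one summand of $V = E \oplus G$ to produce enough orbit elements in each of $X \cap E$ and $X \cap G$.

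For sufficiency, given $\vec\mu$ satisfying~\eqref{eq.vnszq} and $\vec\mu + \vec e_\rho + \vec e_\tau \in \mathcal{L}(\vec t)$, I would set $z = \sum f^{\mu_i}u_i$, $z_E = f^{R-s}u_\rho + f^{S-q}u_\tau$, $z_G = \sum_{i \ne \rho,\tau} f^{\mu_i}u_i$, and $X = \langle z \rangle^c$. Then $X$ is characteristic by construction; Lemma~\ref{la.brba}(ii) reduces non-hyperinvariance to showing $X \cap E$ is not hyperinvariant in $E$, and for this it suffices to prove $X \cap E = \langle z_E \rangle^{c_E}$, which is non-hyperinvariant in $E$ by Proposition~\ref{la.rgwpo}. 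The inclusion $\langle z_E \rangle^{c_E} \subseteq X \cap E$ is immediate from $z_E = \pi_E z \in \pi_E X = X \cap E$ (Lemma~\ref{la.brba}(i)) and the characteristic property of $X \cap E$ in $E$. For the reverse inclusion, I would expand
\[ \pi_E(\alpha z) = (\pi_E \alpha \pi_E)(z_E) + \pi_E(\alpha z_G), \qquad \alpha \in \mathrm{Aut}(V,f). \]
Lemma~\ref{la.umbn}(ii) identifies $\pi_E \alpha \pi_E \in \mathrm{Aut}(E, f_{|E})$, placing the first term in $\langle z_E \rangle^{c_E}$; the second term $\sum_{i \ne \rho,\tau} f^{\mu_i}(\pi_E \alpha u_i)$ has $\pi_E \alpha u_i \in E[f^{t_i}]$, and the inequalities built into $\vec\mu + \vec e_\rho + \vec e_\tau \in \mathcal{L}(\vec t)$ must force $f^{\mu_i}(\pi_E \alpha u_i) \in \langle f^{R-s+1}u_\rho \rangle \oplus \langle f^{S-q+1}u_\tau \rangle \subseteq \langle z_E \rangle^{c_E}$.

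The hard part will be this last step: a case analysis on the position of $t_i$ relative to $R$ and $S$ (the cases $t_i \le R$, $R < t_i < S$, and $t_i \ge S$), verifying that the monotonicity of both $\vec\mu'$ and $\vec t - \vec\mu'$, where $\vec\mu' = \vec\mu + \vec e_\rho + \vec e_\tau$, forces each $f^{\mu_i} E[f^{t_i}]$ into the maximal hyperinvariant subspace $\langle f^{R-s+1}u_\rho \rangle \oplus \langle f^{S-q+1}u_\tau \rangle$ of $\langle z_E \rangle^{c_E}$. This would confirm that the $\mathcal{L}$-condition is precisely the compatibility between the characteristic structure on $E$ and the hyperinvariant structure on $G$ required for $X \cap E$ to coincide with $\langle z_E \rangle^{c_E}$.
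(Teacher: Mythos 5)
Your proposal is correct and follows the same skeleton as the paper's proof: the decomposition $V=E\oplus G$ with $E=\langle u_\rho\rangle\oplus\langle u_\tau\rangle$, Lemma~\ref{la.brba} to split $X$ as $(X\cap E)\oplus(X\cap G)$, Proposition~\ref{la.allen} and Lemma~\ref{la.elnsn} for the two pieces in the necessity direction, and Proposition~\ref{la.rgwpo} plus Lemma~\ref{la.brba}(ii) to reduce sufficiency to showing $X\cap E=\langle z_E\rangle^{c_E}$. The one genuine divergence is in that last step: the paper invokes Theorem~\ref{thm.crgna} with $Y_s=Y_{H|E}$ (the point being that $Y_{H|E}+W$ is hyperinvariant in $V$ precisely because $\vec\mu\,'\in\mathcal{L}(\vec t\,)$, whence $(Y+W)^c\cap E=Y$), whereas you compute $\pi_E(\alpha z)$ directly and argue that the contribution of $\alpha z_G$ falls into $\langle f^{\mu_\rho+1}u_\rho\rangle\oplus\langle f^{\mu_\tau+1}u_\tau\rangle$. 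Your ``hard part'' does go through: writing $\pi_E\alpha u_i\in\langle u_\rho\rangle[f^{t_i}]\oplus\langle u_\tau\rangle[f^{t_i}]=f^{\max(0,R-t_i)}\langle u_\rho\rangle\oplus f^{\max(0,S-t_i)}\langle u_\tau\rangle$, the required bounds $\mu_i+\max(0,R-t_i)\ge\mu_\rho+1$ and $\mu_i+\max(0,S-t_i)\ge\mu_\tau+1$ follow in each of the three cases from exactly the two monotonicity chains $\mu'_1\le\cdots\le\mu'_m$ and $t_1-\mu'_1\le\cdots\le t_m-\mu'_m$. So your route re-proves, in this special case, what Theorem~\ref{thm.crgna} delivers abstractly; it is more computational but makes visible why the $\mathcal{L}(\vec t\,)$-condition is exactly the right compatibility requirement, while the paper's version is shorter and reusable (it is what powers the generalization in Theorem~\ref{thm.wghpr}). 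One cosmetic slip: the identity $X_H=(X\cap E)_{H|E}\oplus(X\cap G)$ is \eqref{eq.lahpi} in Lemma~\ref{la.brba}(i), not a part (iii).
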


\medskip 

\begin{proof} 
Set  $E = \langle u_\rho \rangle \oplus   \langle   u_\tau  \rangle  $ and 
$ G = \langle u_i; \e(u_i) = t_i; d(t_i) > 1 \rangle$. 
Then $ V =  E  \oplus G$,   as in \eqref{eq.hnngt}. 
Suppose   $ X \in {\rm{Chinv}} (V,f) \setminus  {\rm{Hinv}} (V,f) $.
Lemma~\ref{la.brba} implies  
\[
 X = (  X \cap E ) \oplus (X \cap G ) . 
\]  
The subspace  $ X \cap G $
 is hyperinvariant in $G$,  whereas 
$  X \cap E $ is  characteristic but not hyperinvariant in  $E$. 
By assumption  $E$ is generated by two elements. 
Hence it follows from Proposition~\ref{la.allen} that
$ X \cap E  = \langle z \rangle^{c_E} $.
Referring  to  \eqref{eq.ksptu}  and \eqref{eq.cnglu}
we obtain 
$ z = f^{\mu _\rho  } u_\rho + f^{ \mu _\tau  } u_\tau $  
   for some integers  $\mu _\rho , \mu _\tau  $ satisfying 
\eqref{eq.vnszq}.
According to  Lemma~\ref{la.elnsn} 
we have  \,$ X \cap G = \langle w \rangle^{c_G}$ 
for some 
\[ w  = 
\sum _{1 \le i \le m ; \,\,   i \ne \rho, \tau} f^{\mu _i } u_i  \in  X
\cap G  .  
\]
Set  $ \hat{z} =  z  + w $. 
%
Let us show that $X =  \langle \hat{z} \rangle^c $.
If $   \alpha _E \in
{\rm{Aut}}(E, f_{|E} ) $ and $ \alpha _G \in {\rm{Aut}}(f_{|G}, V ) $
then $   \alpha_ V  =  \alpha _E +  \alpha _G \in {\rm{Aut}}(f, V) $. 
Hence 
\,$
 \alpha _E   z +  \alpha _G w  =  \alpha_ V (  z  + w  ) 
\in  \langle \hat{z} \rangle^c   $,
and we obtain 
\beq  \label{eq.npst}  
X =
\langle z \rangle^{c_E}  \oplus 
\langle w \rangle^{c_G} 
\subseteq  \langle \hat{z} \rangle^c . 
\eeq 
Since $ X $ is characteristic 
it is obvious that 
$ \langle \hat{z} \rangle^c \subseteq X$.  Therefore 
$X = \langle \hat{z} \rangle^c  $,  and $X $  is of the form
\eqref{eq.mngr}. 
From \eqref{eq.hxnru}   
follows 
\[  
 \Big( \langle z\rangle^{c_E}  \Big)_{H|E} 
   = 
\langle   f ^{\mu _\rho + 1}u_\rho \rangle \oplus \langle  f ^{\mu
_\tau + 1} u_\tau \rangle .
\]
The proof of Lemma~\ref{la.elnsn} shows that 
\[
\langle w \rangle^{c_G}
= 
\underset{1 \le i \le m; \; i \ne \rho , \tau }
{ \oplus } \langle f ^{\mu _i} u_i  \rangle .
\]
Then \eqref{eq.lahpi}  yields 
\beq \label{eq.vngob}
X_H =  \Big( \langle z \rangle^{c_E}  \Big)_{H|E}  
 \oplus  \big\langle w  \big\rangle ^{c_G}
=
\underset 
{i = 1, \dots , m }{\oplus} \langle f ^{\mu' _i} u_i  \rangle , 
\eeq 
with  $ \mu'_i $, $i = 1, \dots , m$,  defined by  \eqref{eq.nwanar}. 
The subspace $X_H $ is hyperinvariant. 
Hence Theorem~\ref{thm.knwr} implies 
$\vec {\mu '} =  ( \mu' _1 , \dots , \mu '_m ) \in 
   \mathcal{L}(\vec  t \, \, ) $.

Now consider a subspace  
\[ 
X = \Big\langle \sum \nolimits  _{i=1} ^m   f^{\mu _i } u_i  \Big\rangle ^c
\]
 assuming  that  the  inequalities 
\eqref{eq.vnszq} hold  and 
 $ \vec{\mu}\, '  = \vec{\mu} + (\vec e_\rho + \vec e_\tau) 
  \in  \mathcal{L}(\vec  t \, \, ) $.  
Define 
\[
 z   =  f^{\mu _\rho } u_\rho + f^{\mu _\tau } u_\tau ,  
\:\:
 w = \sum _{  1 \le i \le m ; \:\,   i \ne \rho , \tau }    
f^{\mu _i } u_i ,  \quad \text{and}
 \quad   \hat{z} = z  + w . 
\]
Then $X = \langle  \hat{z} \rangle  ^c $. 
Since $X$ is characteristic it follows from  Lemma~\ref{la.brba} 
 that
$ \pi _E  \hat{z} = z \in X  \cap E $, 
 $ \pi _G  \hat{z} = w  \in X \cap G $.
Set $Y =  \langle  z  \rangle  ^{c_E}  $ and $W =
 \langle  w  \rangle  ^{c_G} $.
Then Lemma \ref{la.umbn}(iii) implies 
$ Y =  \langle  z  \rangle  ^c \cap E $.  
The inequalities \eqref{eq.agfrs}  ensure that 
$ W $ is   hyper\-invariant in $G$ and  
that  $ W = \oplus _{i \ne \rho , \tau }  \langle  f^{\mu _i } u_i  \rangle$.  
Let us show first  that $ X = ( Y+ W )^c  $.
Since $X$ is characteristic we have $  ( Y+ W )^c  \subseteq X $. 
Conversely,  $  z + w \in  Y+ W  $ yields 
 $ X =   \langle  z  + w  \rangle  ^ c  \subseteq   ( Y+ W )^c $.
It follows from \eqref{eq.vnszq}  
 that
 $ \langle  z  \rangle  ^{c_E} =   Y$ is not hyperinvariant  
 in $E$. Hence 
\[ 
Y_{H|E}  = 
\langle     f^{\mu _\rho + 1} u_\rho , \,  f^{\mu _\tau +  1 } u_\tau   \rangle 
=\langle     f^{\mu' _\rho } u_\rho , \, f^{\mu' _\tau } u_\tau   \rangle .
\]
Then  $ \vec{\mu}\, ' 
  \in  \mathcal{L}(\vec  t \, \, ) $ 
 implies that  
$Y_{H|E}  + W  $ is hyperinvariant in $V$. 
We apply Theorem~\ref{thm.crgna} choosing  $Y_s =  Y_{H|E} $
and conclude that $X$ is not hyperinvariant. 
\end{proof}

\begin{example}
 {\rm{
We consider a map $f$ with elementary divisors 
$ \lambda ,  \lambda ^3 ,\lambda ^7,  \lambda ^7$
and 
$V = \oplus _{i = 1} ^4  \langle  u_i \rangle  $ such
that 
$ \vec t =  \big(\e(u_i)  \big) 
= (1, 3, 7, 7 )$. 
We know that 
\[0 \le \mu _1 < 1 , \,   0 \le \mu _2 < 3  \quad \text{and}
 \quad  \mu _1 < \mu _2 ,  \;\:  1 - \mu_1 < 3 - \mu_2 , 
\] 
has the  unique solution $ (  \mu _1 , \mu _2 ) = (0, 1) $. 
Hence  
\[
 \vec{\mu} = (0, 1, \mu_3, \mu_4)  \quad \text{and} 
\quad 
 \vec{\mu}\,'  = (1, 2, \mu_3, \mu_4) .
\]
Then $ \vec{\mu}\, ' 
  \in  \mathcal{L}(\vec  t \, \, ) $ 
if and only if 
$  (\mu_3, \mu_4) = (j, j ) $, $ j = 2, \dots , 6 $.
Define $ g_j = u_1 + fu_2 + f^j u_3 +f^j u_4 $
and 
$ X_j = \langle  g_j   \rangle  ^c $,  $ j = 2, \dots , 6 $,
and $ X = \langle  u_1 + fu_2 \rangle ^c $. 
Then 
\[  
    \e(u_2 + f ^{j-1} u_3 + f^{j-1}  u_4 ) =  \e(u_2) , \:  j = 5, 6 , 
\]
implies
$ X_5 = X_6 =  X $.  
We obtain $4$ different characteristic not hyper\-invariant subspaces,
namely $ X , X_2, X_3 , X_4$. 
There are $n_H = 30 $ hyperinvariant subspaces in $V$. 
}} 
\end{example}

\subsection{Concluding remarks}

We have restricted our study to the case of two unrepeated elementary divisors.
Using  methods of our paper  one can prove more 
general results. 
For example one can extend 
Theorem~\ref{thm.pfle}(i) as 
follows.  

\begin{theorem}  \label{thm.wghpr} 
Let $ |K | = 2 $.
Suppose 
$ \lambda ^{t_{\rho _1}} , \dots , \lambda ^{t_{\rho _k}} $
are unrepeated elementary divisors of $f$  such that 
$ t_{\rho _1} < \cdots < t_{\rho _k} $.
Let $ U = ( u_1, \dots , u_m )  \in \mathcal{U} $
be a generator tuple with 
$ \e( u_{\rho _j })  = t_{\rho _j} $, $ j= 1, \dots , k $. 
Suppose 
$(\mu_{ \rho_1}  , \dots , \mu_{\rho_k}  ) \in \N_0^k $
and  set 
$ z =  f^{ \mu _{\rho_1}  }  u_{\rho _1 } 
+ \cdots +f^{ \mu _{\rho _k } }    u_{\rho _k} $.
If    
\beq \label{eq.wzto} 
 \mu_{\rho_j}  <  t_{\rho _j} ,  \:  j = 1, \dots , k ,  
\eeq
and  
\beq \label{eq.mros}
\mu_{ \rho_1}  < \mu_{ \rho_2} 
 < \cdots < \mu _{\rho _k }    \quad \text{and}
 \quad 
0 < t_{\rho_1} - \mu_{ \rho_1}  < \cdots < t_{\rho _k } - \mu _{\rho _k }, 
\eeq 
then 
$ X = \langle z \rangle ^c $ is a characteristic and not hyperinvariant
subspace of $V$.
\end{theorem}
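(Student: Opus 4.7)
The plan is to mimic the proof of Theorem~\ref{thm.pfle}(i): reduce via Lemma~\ref{la.umbn} to a subspace on which $f$ has only the unrepeated elementary divisors, and then prove an analogue of Proposition~\ref{la.rgwpo} for $k$ generators instead of two. The subspace $X = \langle z\rangle^c$ is characteristic by construction, so only the failure of hyperinvariance needs proof.

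First I would set $T = \langle u_{\rho_1},\ldots,u_{\rho_k}\rangle$ and let $V_2$ be the span of the remaining $u_i$'s, so that $T\oplus V_2 = V$ and $f_{|T}$, $f_{|V_2}$ carry, respectively, the unrepeated and the repeated elementary divisors of $f$. Thus $T$ fits into the framework of Lemma~\ref{la.umbn}, and since $z\in T$, part~(iii) of that lemma yields $\langle z\rangle^c\cap T = \langle z\rangle^{c_T}$; the hyperinvariance statement in part~(i), used contrapositively, then reduces the problem to showing that $\langle z\rangle^{c_T}$ is not hyperinvariant in $T$.

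Next I would establish the explicit formula
\[
\langle z\rangle^{c_T} \;=\; \bigl\langle z,\; f^{\mu_{\rho_1}+1}u_{\rho_1},\ldots, f^{\mu_{\rho_k}+1}u_{\rho_k}\bigr\rangle,
\]
which generalises \eqref{eq.sncw}. The inclusion $\supseteq$ comes from the automorphism $\alpha_j\in\Aut(T,f_{|T})$ defined by $\alpha_j u_{\rho_j} = u_{\rho_j} + fu_{\rho_j}$ and $\alpha_j u_{\rho_i} = u_{\rho_i}$ for $i\ne j$, which yields $\alpha_j z - z = f^{\mu_{\rho_j}+1}u_{\rho_j}$. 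For $\subseteq$ I invoke Lemma~\ref{la.unrpes} together with $|K|=2$ (so that $c=1$ in \eqref{eq.spk2}): every $\alpha\in\Aut(T,f_{|T})$ has the form $\alpha u_{\rho_j} = u_{\rho_j} + fv_j + g_j$ with $v_j\in\langle u_{\rho_j}\rangle$ and $g_j\in\bigoplus_{i\ne j}\langle u_{\rho_i}\rangle[f^{t_{\rho_j}}]$, whence
\[
\alpha z - z \;=\; \sum_j f^{\mu_{\rho_j}+1}v_j \;+\; \sum_j f^{\mu_{\rho_j}}g_j.
\]
Decomposing the right-hand side along the $\langle u_{\rho_i}\rangle$'s, the two strict inequalities in \eqref{eq.mros} each place a piece inside the correct $\langle f^{\mu_{\rho_i}+1}u_{\rho_i}\rangle$: $\mu_{\rho_j}>\mu_{\rho_i}$ handles the contributions with $i<j$ (where $\langle u_{\rho_i}\rangle[f^{t_{\rho_j}}] = \langle u_{\rho_i}\rangle$), while $t_{\rho_i}-\mu_{\rho_i}>t_{\rho_j}-\mu_{\rho_j}$ handles those with $i>j$ (where $\langle u_{\rho_i}\rangle[f^{t_{\rho_j}}] = f^{t_{\rho_i}-t_{\rho_j}}\langle u_{\rho_i}\rangle$).

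With the hull in hand I would conclude via the projection $\pi_{\rho_1}\in\End(T,f_{|T})$ of $T$ onto $\langle u_{\rho_1}\rangle$ along $\bigoplus_{j\ge 2}\langle u_{\rho_j}\rangle$. Since $\pi_{\rho_1}z = f^{\mu_{\rho_1}}u_{\rho_1}$, it is enough to show $f^{\mu_{\rho_1}}u_{\rho_1}\notin\langle z\rangle^{c_T}$. A general element of the hull is $p(f)z + \sum_j q_j(f)f^{\mu_{\rho_j}+1}u_{\rho_j}$, with $\langle u_{\rho_i}\rangle$-coordinate $\bigl(p(f)+q_i(f)f\bigr)f^{\mu_{\rho_i}}u_{\rho_i}$; matching this against $f^{\mu_{\rho_1}}u_{\rho_1}$ when $i=1$ and against $0$ when $i\ge 2$, and using $\mu_{\rho_i}<t_{\rho_i}$ throughout, forces the constant term $p(0)$ to be $1$ from the $i=1$ equation and $0$ from any $i\ge 2$ equation, which is impossible in $K=GF(2)$. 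The main obstacle is the second step: both chains of inequalities in \eqref{eq.mros} must be deployed, each against the appropriate half ($i<j$ versus $i>j$) of the decomposition of $g_j$, since the structure of $\langle u_{\rho_i}\rangle[f^{t_{\rho_j}}]$ is genuinely different in the two cases. Once the hull is under control, the projection argument parallels the final step in the proof of Proposition~\ref{la.rgwpo}.
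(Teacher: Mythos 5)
Your proof is correct and is exactly the route the paper intends: Theorem~\ref{thm.wghpr} is stated without proof, with only the remark that it extends Theorem~\ref{thm.pfle}(i) by the methods of the paper, and your argument---reduction to $T=\langle u_{\rho_1},\dots,u_{\rho_k}\rangle$ via Lemma~\ref{la.umbn}, the $k$-generator analogue of the hull formula \eqref{eq.sncw} in which the two chains of \eqref{eq.mros} control the $i<j$ and $i>j$ pieces of $g_j$ respectively, and the projection $\pi_{\rho_1}$---is a faithful realization of that sketch. The only point worth flagging is that your concluding contradiction (forcing $p(0)=1$ from the $i=1$ coordinate and $p(0)=0$ from some $i\ge 2$ coordinate) tacitly requires $k\ge 2$, an assumption the theorem must be read as making anyway, since for $k=1$ the subspace $\langle f^{\mu_{\rho_1}}u_{\rho_1}\rangle^{c}$ is hyperinvariant by Lemma~\ref{la.dinch}.
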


In  Example~\ref{ex.blo}  below we    illustrate
Theorem~\ref{thm.wghpr}.
We also construct   a  non-hyperinvariant subspace 
that is not   the characteristic 
hull  of a single element. Such subspaces will be 
studied in a subsequent paper. 

\begin{example}  \label{ex.blo}
{\rm{
Let $ |K | = 2 $. 
Suppose  $ f$ has  elementary divisors $ \lambda , \lambda ^3, \lambda ^5 $
such that 
$ V =  \langle u_1 \rangle   
  \oplus \langle u_2 \rangle \oplus  \langle u_3 \rangle   $
 and 
\[
(t_1, t_2 , t_2 ) = 
 \big( \e(u_1 ),   \e(u_2 ) , \e(u_3 ) \big) =
(1,3,5) .
\] 
Then 
$(\mu_1, \mu_2 , \mu_3) =( 0,1,2)$
is the only solution 
of  the set of inequalities 
\begin{multline*}
0 \le \mu_1 < 1, \;  0 \le \mu_2 <  3, \,  0 \le \mu_3 <   5 ; 
\\ 
0 \le \mu_1 <\mu_2  < \mu_3 , \:\:  0 < 1 - \mu_1 < 3 -  \mu_2  < 5 - \mu_3  .
\end{multline*}
Set 
\beq \label{eq.zfbz} 
z =  u_1 + fu_2 + f^2 u_3 . 
\eeq 
 Then  $ X = \langle z \rangle^c 
\in {\rm{Chinv}}(V,f) \setminus {\rm{Hinv}}(V,f) $.
The fact 
that $X$ is not hyperinvariant can be verified as follows. 
We have $\e(z) = 3 $ and the indicator sequence of $z$
is 
 $  H(z) = (0, 2,  4, \infty, \infty)$.
Define $Y = \{ x \in V \ | \ H(x) = H(z) \}$.
Then
\[
Y = \{ z + v_2 + v_3 ; \:\,  v_2 \in  f^2 \langle u_2  \rangle ; \: 
v_3 \in  f^3 \langle  u_3 \rangle \} .
\] 
Hence
 \beq \label{eq.zyua}
   \langle z \rangle^c = \langle Y \rangle =
 \langle z,  f^2 u_2  ,  f^3 u_3 \rangle .
\eeq 
Then  $ \pi _3  z =  f^2 u_3   \notin \langle Y \rangle $.
 Therefore  $ \langle Y \rangle $ is not hyperinvariant in $V$.

The following subspace $W$  is also in 
$ {\rm{Chinv}}(V,f) \setminus {\rm{Hinv}}(V,f) $. We shall see
that it is not the characteristic hull of a single element. 
Let 
\[
 W = \langle z_ 1 ,  z_2   \rangle^c  \quad \text{and} \quad 
z_1 = u_1 + fu_2 ,  \:  z_2 = f^2 u_3 .
\]
Then 
  $ \langle z_ 1 , f^3 u_3 \rangle^c 
= \langle z_ 1  \rangle $ and  
$ \langle z_ 2  \rangle^c 
= \IIm f^2 \cap \Ker f^3 =  \IIm f^2 =   \langle f^2 u_2 , f^2 u_3  \rangle  $.
Therefore 
$ W =  \langle z_ 1  \rangle \oplus \langle   f^2 u_3     \rangle  $.
We have 
$ \pi _1 z_1 = u_1 \notin W $. Hence 
$W$ is  not hyperinvariant. 
Suppose $ W =  \langle w  \rangle^c $ for some $ w \in W$.
Then $ w = x + y $ with  $ x \in \langle z_ 1  \rangle $,  
$ y \in  \langle  f^2 u_3  \rangle   $. 
Because of $ z_1 \in W $ and $ \h(z_1 ) = 0 $
it is necessary that 
$ \h(w) = 0 $. Hence  $ \h(x) = 0 $.
Therefore 
$\langle x \rangle = \langle z_1 \rangle $, and 
$ x = \gamma z_1 $ for some $\gamma \in {\rm{Aut}}(f, V )$.
Hence we can assume
$ w = z_1 + y $. 
If $ \h(y ) \ge 3 $ then $ \e(w) = \e(z_1)$ and 
$  \langle w   \rangle ^c  =  \langle z_ 1  \rangle ^c \subsetneq W $.
If  $ \h(y ) = 2 $ then $ y = \beta(  f^2 u_3 )  $ for some $\beta 
\in {\rm{Aut}}(f, V )$,
and therefore  
\[  \langle w   \rangle ^c  =  \langle z_1 +   f^2 u_3   \rangle ^c 
= \langle z  \rangle ^c ,
\]
where $z$ is given by \eqref{eq.zfbz}. 
We have seen before that $ \pi_3 z =  f^2 u_3 \notin  \langle z  \rangle ^c $.
Hence  
$  \langle w   \rangle ^c  =  \langle z_ 1 + z_2  \rangle ^c \subsetneq W $.
Therefore $  \langle w   \rangle ^c  \ne W $ for all $w \in W $.
\hfill $\square$ 
}}
\end{example}

\noindent
{\bf{Acknowledgement:}} 
We are grateful to a referee for   helpful suggestions and comments.

\end{document}